\newtheorem{theorem}{Theorem}[section]
\newtheorem{lemma}[theorem]{Lemma}
\newtheorem{corollary}[theorem]{Corollary}
\newtheorem{proposition}[theorem]{Proposition}
\theoremstyle{definition}
\numberwithin{equation}{section}
\DeclareMathOperator{\RE}{Re}
\DeclareMathOperator{\rad}{rad}
\DeclareMathOperator{\norm}{N}
\begin{document}

\title[Upper bound for the number of smooth values of a polynomial]
{An upper bound for the number of smooth values of a polynomial and its applications}

\author[M. Mine]{Masahiro Mine}
\address{Global Education Center\\ Waseda University\\
1-6-1 Nishiwaseda, Shinjuku-ku, Tokyo 169-8050, Japan}
\email{m-mine@aoni.waseda.jp}

\date{}

\begin{abstract}
We prove a new upper bound for the number of smooth values of a polynomial with integer coefficients. 
This improves Timofeev's previous result unless the polynomial is a product of linear polynomials with integer coefficients. 
As an application, we provide another proof for a result of Cassels which was used to prove that the Hurwitz zeta-function with algebraic irrational parameter has infinitely many zeros on the domain of convergence. 
We also apply the main result to a problem on primitive divisors of quadratic polynomials. 
\end{abstract}

\subjclass[2020]{Primary 11N32; Secondary 11N25}

\keywords{smooth number, polynomial value, prime ideal, primitive divisor}

\maketitle

\section{Introduction}\label{sec:1}

\subsection{Statement of the main results}\label{sec:1.1}
Let $P^+(m)$ denote the largest prime factor of an integer $m$ with conventions $P^+(\pm1)=1$ and $P^+(0)=+\infty$. 
An integer $m$ is said to be \textit{$y$-smooth} or \textit{$y$-friable} if $P^+(m) \leq y$ holds. 
The study of the distribution of smooth numbers is a classical topic in analytic number theory. 
Denote by $\Psi(x,y)$ the number of integers $n$ in the interval $[1,x]$ such that $n$ is $y$-smooth. 
In 1930, Dickman \cite{Dickman1930} showed that 
\begin{align*}
\Psi(x,x^{1/u})
\sim \rho(u) x
\end{align*}
as $x \to\infty$ for every fixed $u>0$, where $\rho$ is the continuous solution of the system
\begin{align*}
\begin{cases}
\rho(u)=1
& \text{for $0 \leq u \leq 1$, }
\\
u\rho'(u)+\rho(u-1)=0
& \text{for $u>1$.}
\end{cases}
\end{align*}
The function $\rho$ is known today as the Dickman function. 
See the surveys \cite{Granville2008, HildebrandTenenbaum1993} for more results on the asymptotic behavior of $\Psi(x,y)$. 

Let $f(t) \in \mathbb{Z}[t]$. 
Denote by $\Psi_f(x,y)$ the number of integers $n$ in $[1,x]$ such that $f(n)$ is $y$-smooth. 
Suppose that $f(t)$ has distinct irreducible factors over $\mathbb{Z}[t]$ of degrees $d_1, \ldots, d_g \geq1$, respectively. 
It was conjectured by Martin \cite{Martin2002} that 
\begin{align}\label{eq:09251118}
\Psi_f(x,x^{1/u})
\sim \rho(d_1u) \cdots \rho(d_gu) x
\end{align}
as $x \to\infty$ for every fixed $u>0$. 
No result is known that \eqref{eq:09251118} holds unconditionally for any non-linear polynomial $f(t)$ unless $u$ is too small to make it trivial. 
Then upper and lower bounds for $\Psi_f(x,y)$ have been studied. 
Hmyrova \cite{Hmyrova1966} proved that, if $f(t)$ is irreducible, then 
\begin{align*}
\Psi_f(x,x^{1/u})
< c(f) \exp\left(-u \log{\frac{u}{e}}\right) x
\end{align*}
holds in the range $e \leq u \leq (\log{x})(\log\log{x})^{-1}$ for sufficiently large $x$, where $c(f)$ is a positive constant depending on the degree and coefficients of $f(t)$. 
See also a recent article of Suzuki \cite{Suzuki2025+} for an improvement of Hmyrova's estimate. 
For more general polynomials, Timofeev \cite{Timofeev1977} obtained that
\begin{align*}
\Psi_f(x,x^{1/u})
< \frac{(g+\epsilon)^{[u]}}{d(d-1)^{[u]-1}u^{[u]}} x
\end{align*}
holds in the range $1 \leq u \leq \sqrt{(\log{x})(d+\epsilon)^{-1}}$ for sufficiently large $x$ and every $\epsilon>0$, where we put $d=d_1+\cdots+d_g$ under the same assumption as in \eqref{eq:09251118}. 
Here, and throughout this paper, $[u]$ denotes the largest integer not exceeding $u$. 
We omit the known results on lower bounds for $\Psi_f(x,y)$ because the theme of this paper is about the upper bounds. 
The statement of the main result is as follows. 

\begin{theorem}\label{thm:1.1}
Suppose that $f(t) \in \mathbb{Z}[t]$ has distinct irreducible factors over $\mathbb{Z}[t]$ of degrees $d_1, \ldots, d_g \geq1$, respectively. 
If $d=d_1+\cdots+d_g \geq2$, then 
\begin{align}\label{eq:09242338}
\Psi_f(x,x^{1/u})
< \gamma_f(u)
\frac{g^{[u]}}{d(d-1)^{[u]-1}u^{[u]}}
\left(x+O\left(\frac{x}{\log\log{x}}\right)\right)
\end{align}
holds in the range $1 \leq u \leq \sqrt{\log{x}}\, (\log\log{x})^{-1}$ for sufficiently large $x$, where $\gamma_f(u)$ is a positive real number which can be explicitly represented as
\begin{align}\label{eq:10031455}
\gamma_f(u)
= \frac{1}{2}+\frac{2g+1}{16du}+\sqrt{\frac{2g+1}{16du}+\left(\frac{2g+1}{16du}\right)^2}. 
\end{align}
The implied constant in \eqref{eq:09242338} depends only on $f(t)$.
\end{theorem}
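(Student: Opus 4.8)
\noindent\emph{Proof proposal.}

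The plan is to reduce to $f$ squarefree—since $\Psi_f(x,y)$ depends only on $\rad f=f_1\cdots f_g$—and to organize the prime factorization of each $f_j(n)$ through the number field $K_j=\mathbb{Q}(\alpha_j)$, where $f_j(\alpha_j)=0$, via $f_j(n)=\pm a_j\,N_{K_j/\mathbb{Q}}(n-\alpha_j)$. The structural fact I would exploit is that, away from the finitely many primes dividing $a_j\operatorname{disc}(f_j)[\mathcal{O}_{K_j}:\mathbb{Z}[\alpha_j]]$, the ideal $(n-\alpha_j)$ is divisible only by prime ideals of residue degree one: if $\mathfrak{P}\mid(n-\alpha_j)$ then $\alpha_j\equiv n\pmod{\mathfrak{P}}$ with $n\in\mathbb{Z}$, so the image of $\alpha_j$ lies in the prime field $\mathbb{F}_p$, forcing $\mathfrak{P}$ to have residue degree one and norm $p$. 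Hence if $f(n)$ is $y$-smooth then every prime ideal dividing $\prod_j(n-\alpha_j)$ has norm $\le y$, while $\log\bigl(\prod_j N(n-\alpha_j)\bigr)=\log|f(n)|-O(1)$. Discarding the $\le x/\log x=o(x/\log\log x)$ integers $n\le x/\log x$, for the remaining $n$ one has $\log|f(n)|=d\log x+O(\log\log x)$, so every $y$-smooth value carries at least $du-o(1)\ge d[u]$ prime ideal factors (with multiplicity) distributed over $K_1,\dots,K_g$.

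Next I would attach to each such $n$ a canonical certificate: $[u]$ prime ideals $\mathfrak{p}_1,\dots,\mathfrak{p}_{[u]}$ dividing the $(n-\alpha_j)$, extracted from the top, whose underlying rational primes $p_1,\dots,p_{[u]}$ are thereby forced into a descending chain of intervals $I_1\supsetneq I_2\supsetneq\cdots$ hanging below $y$, of logarithmic widths $\log\frac{\log y}{\log\inf I_1}=\frac1{du}$ and $\log\frac{\log\inf I_{i-1}}{\log\inf I_i}=\frac1{(d-1)u}$ for $i\ge2$. The arithmetic behind this split: since $|f(n)|\asymp x^{d}$ and $P^{+}(f(n))\le y=x^{1/u}$, the largest prime sits in a window of relative size $\frac{\log y}{\log|f(n)|}\approx\frac1{du}$; after removing $[u]$ primes of size $\asymp y$ the remaining chunk has size $\asymp x^{\,d-[u]/u}\asymp x^{d-1}$, so each further prime only sees "$d-1$ worth of room", giving relative size $\frac1{(d-1)u}$. (The rare $n$ for which this greedy extraction degenerates—so much of $|f(n)|\asymp x^d$ sits on very few, or very small, primes that no certificate exists—are set aside into an exceptional set.) Because $\mathfrak{p}_i\mid(n-\alpha_{j_i})$ confines $n$ to a residue class modulo $p_i$, an $n$ with certificate $(\mathfrak{p}_1,\dots,\mathfrak{p}_{[u]})$ lies in $\prod_i\rho_{j_i}(p_i)$ classes modulo $p_1\cdots p_{[u]}$, where $\rho_j(p)$ denotes the number of roots of $f_j$ modulo $p$. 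This yields a bound of the shape
\[
\Psi_f(x,y)\ \le\ x\sum_{(\mathfrak p_1,\dots,\mathfrak p_{[u]})}\ \prod_{i=1}^{[u]}\frac{\rho_{j_i}(p_i)}{p_i}\ +\ (\text{exceptional and rounding terms}).
\]

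To evaluate the inner sum I would invoke the Landau prime ideal theorem for each $K_j$ in the form $\sum_{p\le t}\rho_j(p)/p=\log\log t+c_j+O(1/\log t)$ (simple pole of $\zeta_{K_j}$ at $s=1$, a zero-free neighbourhood, and partial summation), which makes the per-prime weight exactly $g\cdot\log\frac{\log\sup I_i}{\log\inf I_i}$—the \emph{honest} constant $g$, not $g+\epsilon$; this is the precise point where nothing is gained when $f$ is a product of linear polynomials, for then $K_j=\mathbb{Q}$ and counting roots is the same as counting prime ideals. Multiplying the chosen widths produces the clean main term $g^{[u]}\big/\bigl(d(d-1)^{[u]-1}u^{[u]}\bigr)\cdot x$, matching \eqref{eq:09242338}. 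The explicit prefactor $\gamma_f(u)$ then appears when one optimizes a free parameter—essentially where to terminate the extraction—against a secondary main term of size $\asymp\frac{g}{du}$ (coming from the leftover $y$-smooth chunk, whose own smoothness must be controlled by a Rankin-type estimate); the balance produces a quadratic equation, and one checks that $\gamma_f(u)$ in \eqref{eq:10031455} is exactly its larger root, that of $\gamma^2-\bigl(1+\tfrac{2g+1}{8du}\bigr)\gamma+\tfrac14=0$. The $O(x/\log\log x)$ in \eqref{eq:09242338} then absorbs: the $y$-smooth part of $f(n)$ supported below $I_{[u]}$, the finitely many ramified or singular primes, the discarded small $n$, the degenerate $n$ above, and the $O(1)$-per-class rounding losses—each handled by a separate Rankin or moment bound.

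The step I expect to be the real obstacle is the uniform control of these exceptional sets throughout the whole range $1\le u\le\sqrt{\log x}\,(\log\log x)^{-1}$. For small $u$ a typical $n$ has $\Omega(f(n))\asymp g\log\log x$, far exceeding $du$, so the top $[u]$ prime factors need not be large and the canonical certificate can fail; one must show that this failure set—and more generally the set of $n$ whose $y$-smooth value concentrates on atypically small primes—has size $o(x/\log\log x)$, which is a large-deviation (Rankin) input rather than an elementary estimate. For $u$ near $\sqrt{\log x}/(\log\log x)$ the prime sum has $[u]$ factors and the $O(1/\log t)$ error of the prime ideal theorem, accumulated $[u]$ times and measured against the weights $x/(p_1\cdots p_{[u]})$, is only just negligible—this is exactly where the upper bound on $u$ is consumed. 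Finally, to obtain the constant $\gamma_f(u)$ rather than $(1+o(1))\gamma_f(u)$ one must carry the secondary terms through the whole iteration instead of absorbing them into an $\epsilon$, and this careful tracking is the most delicate part of the argument.
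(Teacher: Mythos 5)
Your proposal correctly identifies the key analytic input (the prime ideal theorem / Nagell's formula, giving the constant $g$ instead of Timofeev's $g+\epsilon$), the reduction to squarefree $f$, the general shape of the main term, and the quadratic equation $\gamma^2-(1+\tfrac{2g+1}{8du})\gamma+\tfrac14=0$ satisfied by $\gamma_f(u)$. But you have misidentified the mechanism at the two places that matter most, and the machinery you propose to fill the gaps (Rankin estimates, exceptional-set control) does not appear in the paper, nor is it needed there.

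First, where $\gamma_f(u)$ actually comes from. In the paper one writes $\log f(n)=\sum_{p^v\mid f(n)}\log p$ and splits the sum according to whether $P^+(k)>\sqrt{y}$ or $P^+(k)\le\sqrt{y}$ (Proposition \ref{prop:2.1}). The large-prime part is summed directly, giving the quantity $V$; crucially, restricting the largest prime factor to $(\sqrt{y},y]$ cuts the Mertens-type sum $\sum \Lambda(k)\omega_f(k)/k$ from $g\log y$ to $\tfrac{g}{2}\log y$ (estimate \eqref{eq:09292108}), which is the source of the leading $\tfrac12$ in $\gamma_f(u)$. The small-prime part is handled by Cauchy--Schwarz, producing $\sqrt{\Psi_f\cdot W}$ with $W$ a genuinely bivariate sum. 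The inequality $\Psi<V+\sqrt{\Psi W}$ is then solved as a quadratic in $\Psi$, yielding $\Psi<V+\tfrac{W}{2}+\sqrt{VW+\tfrac{W^2}{4}}$, and $\gamma_f(u)$ is read off from the ratio $W/V\asymp\tfrac{2g+1}{4du}$. There is no free parameter being optimized, no Rankin input, and no secondary main term controlled by a large-deviation bound: the constant is forced purely by the $\sqrt{y}$ split and Cauchy--Schwarz, applied once at the very start and then propagated through the induction.

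Second, where $d-1$ comes from. It does not arise by "removing $[u]$ primes of size $\asymp y$ and finding $x^{d-[u]/u}\asymp x^{d-1}$ left over"; for $u$ close to $2^-$ this exponent is near $d-\tfrac12$, not $d-1$, so that heuristic is simply wrong. In the paper, $d-1$ enters through the explicit choice $z\asymp x/\log x$ (equation \eqref{eq:10022046}), so that $\log f(z)\sim d\log x$ and $\log(f(z)/x)\sim(d-1)\log x$; the iteration (Lemma \ref{lem:3.1}) replaces $f(n)$ by $f(n)/\kappa$ and normalizes by $\log(f(z)/x)$ because $\kappa\le h\le x$. Relatedly, the inductive step is not a greedy certificate living in a descending chain of intervals $I_1\supsetneq I_2\supsetneq\cdots$ of prescribed logarithmic widths; rather, one iterates the elementary identity $\log(f(n)/\kappa)=\sum_{\lambda\mid f(n)/\kappa}\Lambda(\lambda)$ and separates the range $\kappa\lambda\le h$ (main term, fed back into the induction) from $\kappa\lambda>h$ (tail, bounded by $\omega_f(\kappa\lambda)$ alone). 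The variables $k_1,\dots,k_m$ in Proposition \ref{prop:3.2} are constrained only by $k_1\cdots k_m\le h$ and by smoothness, never confined to individual intervals; the product structure $g^m(\log y)^m$ emerges from unrestricted Mertens sums, and a combinatorial decomposition (Lemma \ref{lem:4.2}, and the binomial manipulations in \eqref{eq:09301526}) handles the failure of complete multiplicativity of $\omega_f$. Because the argument is this direct, the uniform range $1\le u\le\sqrt{\log x}/\log\log x$ requires no exceptional-set analysis at all: the error terms visible in Propositions \ref{prop:4.3}--\ref{prop:4.4} are controlled by elementary bounds such as $\sum_{p\le y}[\log x/\log p]\log p\ll y\log x/\log y$ and absorbed into $O(x/\log\log x)$ after substituting $y=x^{1/u}$. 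Your worries about small-$u$ degeneracy and accumulated prime-ideal-theorem errors are artifacts of the certificate framework you proposed, not obstacles the paper has to face.
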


Suppose further that at least one of the irreducible factors of $f(t)$ is not a linear polynomial. 
Then we see that $g<d$ is satisfied, which implies 
\begin{align*}
\frac{2g+1}{16du}
\leq \frac{2d-1}{16d}
< \frac{1}{8}
\end{align*}
for any $u \geq1$. 
In this case, we obtain
\begin{align*}
\gamma_f(u)
< \frac{1}{2}+\frac{1}{8}+\sqrt{\frac{1}{8}+\frac{1}{64}}
= 1. 
\end{align*}
Hence, \eqref{eq:09242338} improves on Timofeev's upper bound unless $f(t)$ is a product of linear polynomials. 
Furthermore, if $f(t)$ is an irreducible polynomial of degree $d \geq2$, then 
\begin{align}\label{eq:10031502}
\gamma_f(u)
= \frac{1}{2}+\frac{3}{16du}+\sqrt{\frac{3}{16du}+\left(\frac{3}{16du}\right)^2} 
\leq \frac{19+\sqrt{105}}{32}
= 0.913...
\end{align}
holds for any $u \geq1$. 
While this may seem like only a slight improvement over the previous result, we will see in Sections \ref{sec:1.2} and \ref{sec:1.3} that it has valuable applications to several issues. 

Theorem \ref{thm:1.1} is proved by induction on $m=[u]$. 
In Section \ref{sec:2}, we overview the strategy of the proof and show some estimates in the case $m=1$. 
Then we explain the inductive process in Section \ref{sec:3}. 
The proof of Theorem \ref{thm:1.1} is finally completed in Section \ref{sec:4}.

\subsection{Note to ``Footnote to a note of Davenport and Heilbronn''}\label{sec:1.2}
Let $\alpha$ be an arbitrary algebraic irrational number, and $K=\mathbb{Q}(\alpha)$ the algebraic field generated by $\alpha$ over the rationals. 
Let $\mathfrak{a}$ be the ideal denominator of the fractional ideal $(\alpha)$ of $K$. 
Then we see that $(n+\alpha)\mathfrak{a}$ is an integral ideal for every $n \in \mathbb{Z}$. 
Denote by $C_\alpha(x)$ the number of integers $n$ in $[1,x]$ such that there exists a prime ideal $\mathfrak{p}$ of $K$ with the conditions 
\begin{align}\label{eq:10022256}
\mathfrak{p} \mid (n+\alpha)\mathfrak{a}
\quad\text{and}\quad
\mathfrak{p} \nmid \prod_{\substack{1 \leq m \leq x \\ m \neq n}} (m+\alpha)\mathfrak{a}. 
\end{align}
A lower bound for $C_\alpha(x)$ was used to study the zeros of the Hurwitz zeta-function $\zeta(s,\alpha)$ for $\RE(s)>1$, where $\alpha$ is an algebraic irrational number with $0<\alpha<1$. 
Indeed, Cassels \cite{Cassels1961} applied the estimate $C_\alpha(x)>0.51x$ for large $x$ to showed that $\zeta(s,\alpha)$ has infinitely many zeros in the strip $1<\RE(s)<1+\delta$ for any $\delta>0$, which extends the classical result of Davenport and Heilbronn \cite{DavenportHeilbronn1936a}. 
The relation between $C_\alpha(x)$ and the number of smooth values of a polynomial $f(t) \in \mathbb{Z}[t]$ was firstly mentioned by Worley \cite{Worley1970}, and a more detailed research was presented by Andersson in his unpublished work \cite{Andersson2016+}. 
In Section \ref{sec:5}, we prove the following result. 

\begin{theorem}\label{thm:1.2}
Take an irreducible polynomial $f(t) \in \mathbb{Z}[t]$ with $f(-\alpha)=0$. 
Then the asymptotic formula
\begin{align*}
C_\alpha(x)
= x-\Psi_f(x,x)
+ O\left(\frac{x}{\log{x}}\right) 
\end{align*}
holds for sufficiently large $x$, where the implied constant depends only on $K=\mathbb{Q}(\alpha)$. 
\end{theorem}

Therefore, to show a lower (\textit{resp.}\ upper) bound for $C_\alpha(x)$ is nothing other than to show an lower (\textit{resp.}\ upper) bound for $\Psi_f(x,x)$. 
In particular, Theorem \ref{thm:1.1} implies the following result. 

\begin{corollary}\label{cor:1.3}
Let $\alpha$ be an arbitrary algebraic irrational number. 
Then we have 
\begin{align*}
C_\alpha(x)
\geq \left\{1-\frac{1}{2d}-\frac{3}{16d^2}-\frac{1}{d}\sqrt{\frac{3}{16d}+\left(\frac{3}{16d}\right)^2}\right\} x
\end{align*}
for sufficiently large $x$, where $d=\deg(\alpha) \geq2$.  
In particular, the inequality
\begin{align*}
C_\alpha(x)
> 0.543x 
\end{align*}
holds for sufficiently large $x$. 
\end{corollary}

Corollary \ref{cor:1.3} was essentially obtained by Cassels \cite{Cassels1961} and Worley \cite{Worley1970}. 
Note that the original proofs in \cite{Cassels1961, Worley1970} should be modified due to an erroneous classification of prime ideals. 
See \cite[Remark 4.10]{Mine2022+} for the details. 
The main difference between the methods in \cite{Cassels1961, Worley1970} and this paper is that the proof of Theorem \ref{thm:1.1} does not require any ideal-theoretic arguments.

\subsection{Primitive divisors of $n^2+b$}\label{sec:1.3}
Another application of Theorem \ref{thm:1.1} is about the primitive divisors of quadratic polynomials. 
Let $(A_n)$ be a sequence of integers. 
We say that an integer $d>1$ is a \textit{primitive divisor} of $A_n$ if $d \mid A_n$ and $(d,A_m)=1$ for all nonzero terms $A_m$ with $m<n$. 
Then we define\
\footnote{
In the previous papers, $R_b(x)$ was denoted by $\rho_b(x)$, and usually $\rho_1(x)$ was abbreviated to $\rho(x)$. 
We change the symbols so as to keep that $\rho$ represents the Dickman function. 
}
\begin{align*}
R_b(x)
= \# \left\{n \in \mathbb{Z} \cap [1,x] ~\middle|~ \textrm{$n^2+b$ has a primitive divisor} \right\}
\end{align*}
for $b \in \mathbb{Z}$. 
Suppose that $-b$ is not an integer square. 
Then Everest and Harman \cite{EverestHarman2008} conjectured that $R_b(x) \sim (\log{2})x$ as $x \to\infty$. 
They also proved 
\begin{align*}
0.5324x
< R_b(x)
< 0.905x
\end{align*}
for sufficiently large $x$. 
Recently, Harman \cite{Harman2024} sharpened the result for $R_1(x)$ to
\begin{align*}
0.5377x
< R_1(x)
< 0.86x. 
\end{align*}
Li \cite{Li2024+} also announced an even better upper bound $R_1(x)<0.8437x$. 
In Section \ref{sec:6}, we connect $R_b(x)$ with the number of smooth values of the quadratic polynomial $f(t)=t^2+b$. 

\begin{theorem}\label{thm:1.4}
Let $f(t)=t^2+b$ with $-b$ not an integer square.  
Then we have
\begin{align*}
R_b(x)
= x-\Psi_f(x,x)
+ O\left(\frac{x \log\log{x}}{\log{x}}\right)
\end{align*}
for sufficiently large $x$, where the implied constant depends only on $|b|$. 
\end{theorem}

Applying Theorem \ref{thm:1.1}, we prove a new lower bound for $R_b(x)$. 

\begin{corollary}\label{cor:1.5}
Suppose that $-b$ is not an integer square. 
Then the inequality
\begin{align*}
R_b(x)
> 0.543x
\end{align*}
holds for sufficiently large $x$. 
\end{corollary}

For $b=1$, Corollary \ref{cor:1.5} is also related to a problem on arctangent relations which was originally studied by Todd \cite{Todd1949}. 
For $n \in \mathbb{Z}_{\geq1}$, we say that $\arctan n$ is \textit{reducible} if there exist integers $f_1, \ldots, f_k$ and $n_1,\ldots,n_k$ with $1 \leq n_j<n$ such that 
\begin{align*}
\arctan n
= f_1 \arctan n_1+\cdots+f_k \arctan n_k
\end{align*}
holds. 
For example, $\arctan 3$ is reducible by $\arctan 3= 3\arctan 1-\arctan 2$. 
On the contrary, we say that $\arctan n$ is \textit{irreducible} if it is not reducible. 
Then we define 
\begin{align*}
N(x)
= \# \left\{n \in \mathbb{Z} \cap [1,x] ~\middle|~ \text{$\arctan n$ is irreducible} \right\}. 
\end{align*}
Chowla and Todd \cite{ChowlaTodd1949} conjectured that $N(x) \sim (\log{2})x$ as $x \to\infty$. 
Kowalski \cite{Kowalski2004} applied an equidistribution result on the roots of $X^2+1 \equiv 0 ~(\bmod\,{p})$ to show that
\begin{align*}
N(x)
> (2-\epsilon+o(1)) \frac{x}{\log{x}}
\end{align*}
as $x \to\infty$ for any $\epsilon>0$. 
Actually, it is not difficult to see that $N(x)=R_1(x)$ holds; see Lemmas \ref{lem:6.1} and \ref{lem:6.2}. 
Hence we obtain the following result. 

\begin{corollary}\label{cor:1.6}
For sufficiently large $x$, we have $N(x)> 0.543x$. 
\end{corollary}

It would be worth noting that $N(x)<0.8437x$ also follows from Li's work \cite{Li2024+}. 
If \eqref{eq:09251118} is true for $f(t)=t^2+b$ and $u=1$, then Theorem \ref{thm:1.4} yields that
\begin{align*}
R_b(x)
\sim (1-\rho(2))x
\end{align*}
as $x \to\infty$. 
Recall that $\rho(u)=1-\log{u}$ for $1 \leq u \leq 2$. 
Hence \eqref{eq:09251118} contains the conjectures of Everest--Harman and Chowla--Todd on $R_b(x)$ and $N(x)$, respectively.  

\subsection*{Acknowledgments}
The author is especially grateful to Yuta Suzuki for sending a copy and translation of Timofeev's paper \cite{Timofeev1977}. 
Many discussions with him were very fruitful. 
The author would also like to thank Hiroshi Mikawa. 
Corollaries \ref{cor:1.5} and \ref{cor:1.6} were motivated by his lecture at Hachiouji Number Theory Seminar 2024. 
The work of this paper was supported by JSPS Grant-in-Aid for Early-Career Scientists (Grant Number 24K16906).

\section{Strategy and the initial estimates}\label{sec:2}
We begin by a simple observation. 
Let $f(t) \in \mathbb{Z}[t]$ be any polynomial of positive degree, and $x,y,z \in \mathbb{R}$ with $x>z \geq1$ and $y \geq1$. 
Suppose that the inequality 
\begin{align}\label{eq:09272331}
\Psi_f(x,y)-\Psi_f(z,y)
< V+ \sqrt{\Psi_f(x,y)-\Psi_f(z,y)} \sqrt{W}
\end{align}
holds with some parameters $V, W>0$. 
Then it can be easily seen that
\begin{align}\label{eq:10022027}
\Psi_f(x,y)-\Psi_f(z,y)
< V+\frac{W}{2}+\sqrt{VW+\frac{W^2}{4}}
\end{align}
is satisfied. 
Therefore we would like to find suitable $V$ and $W$ such that \eqref{eq:09272331} holds. 
Here, we notice that we may suppose that $f(t) \to\infty$ as $t \to\infty$ without loss of generality since $\Psi_{-f}(x,y)=\Psi_f(x,y)$ holds by definition. 
In this case, there exists a constant $T_0(f) \geq2$ such that 
\begin{align}\label{eq:09291328}
f(t_1)
> f(t_2)
> 1
\end{align}
is valid for every $t_1,t_2 \in \mathbb{R}$ with $t_1>t_2>T_0(f)$. 
The following result is the initial step in the investigation of $V$ and $W$. 

\begin{proposition}\label{prop:2.1}
Let $f(t) \in \mathbb{Z}[t]$ be a polynomial such that $f(t) \to\infty$ as $t \to\infty$, and $x,y,z \in \mathbb{R}$ with $x>z>T_0(f)$ and $y \geq1$. 
Then inequality \eqref{eq:09272331} holds with
\begin{align*}
V
&= \frac{1}{\log{f(z)}} \sum_{\substack{1 \leq k \leq f(x) \\ \sqrt{y} < P^+(k) \leq y}} \Lambda(k)
\sum_{\substack{z<n \leq x \\ k \mid f(n) \\ P^+(f(n)) \leq y}} 1, \\
W
&= \frac{1}{(\log{f(z)})^2} \sum_{\substack{1 \leq k_1,k_2 \leq f(x) \\ P^+(k_1) \leq \sqrt{y} \\ P^+(k_2) \leq \sqrt{y}}} \Lambda(k_1) \Lambda(k_2)
\sum_{\substack{z<n \leq x \\ [k_1,k_2] \mid f(n) \\ P^+(f(n)) \leq y}} 1, 
\end{align*}
where $[k_1,k_2]$ denotes the least common multiple of $k_1$ and $k_2$. 
\end{proposition}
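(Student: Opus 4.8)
The plan is to read off $V$ and $W$ as weighted sums over the set
\[
S \;=\; \bigl\{\, n \in \mathbb{Z} : z < n \le x,\ P^+(f(n)) \le y \,\bigr\},
\]
so that $\#S = \Psi_f(x,y)-\Psi_f(z,y)$, and then to deduce \eqref{eq:09272331} from the Cauchy--Schwarz inequality applied to $S$. Write $N = \#S$ and $L = \log f(z)$. By \eqref{eq:09291328} together with $z > T_0(f)$ we have $f(z) > 1$, hence $L > 0$; I would assume from now on that $N \ge 1$, which is the only case relevant in the sequel.

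The starting point is the elementary identity $\sum_{k \mid m}\Lambda(k) = \log m$ for a positive integer $m$. For $n \in S$ the value $f(n)$ is a positive integer, in fact $f(n) > f(z) > 1$ by \eqref{eq:09291328}, and it is $y$-smooth, so every prime power $k \mid f(n)$ satisfies $P^+(k) \le y$. Splitting the divisors of $f(n)$ according to whether $P^+(k) \le \sqrt{y}$ or $\sqrt{y} < P^+(k) \le y$ therefore gives
\[
\log f(n) \;=\; A(n) + B(n), \qquad
A(n) = \sum_{\substack{k \mid f(n)\\ P^+(k) \le \sqrt{y}}}\Lambda(k), \qquad
B(n) = \sum_{\substack{k \mid f(n)\\ \sqrt{y} < P^+(k) \le y}}\Lambda(k).
\]
(The threshold $\sqrt{y}$ plays no role in this proposition; it is chosen with an eye to the later estimates of $V$ and $W$.) Since $n > z > T_0(f)$, applying \eqref{eq:09291328} with $t_1 = n$ and $t_2 = z$ yields $\log f(n) > L$ for every $n \in S$; summing over $S$ and using $N \ge 1$,
\[
NL \;<\; \sum_{n \in S}\log f(n) \;=\; \sum_{n \in S}A(n) + \sum_{n \in S}B(n).
\]

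Next I would bound the first sum on the right by Cauchy--Schwarz, $\sum_{n \in S}A(n) \le \sqrt{N}\,\bigl(\sum_{n \in S}A(n)^2\bigr)^{1/2}$, which gives
\[
NL \;<\; \sqrt{N}\,\Bigl(\sum_{n \in S}A(n)^2\Bigr)^{1/2} + \sum_{n \in S}B(n),
\]
and then divide through by $L$. It remains to match the two resulting sums with $V$ and $W$. Interchanging the order of summation in the definition of $V$ and noting that $k \mid f(n)$ already forces $k \le f(n) \le f(x)$ (again by \eqref{eq:09291328}), so that the condition $k \le f(x)$ is redundant, one gets $V = L^{-1}\sum_{n \in S}B(n)$. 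In the same way the conditions $k_1,k_2 \le f(x)$ in $W$ are redundant, and because $[k_1,k_2] \mid f(n)$ is equivalent to $k_1 \mid f(n)$ and $k_2 \mid f(n)$ simultaneously, the inner double sum over $k_1,k_2$ factors as $\bigl(\sum_{k \mid f(n),\,P^+(k)\le\sqrt{y}}\Lambda(k)\bigr)^2 = A(n)^2$; hence $W = L^{-2}\sum_{n \in S}A(n)^2$. Substituting these identities into the last display produces exactly \eqref{eq:09272331}.

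I do not expect a genuine obstacle here: the entire content is the decomposition $\log f(n) = A(n) + B(n)$ combined with the trivial lower bound $\log f(n) > \log f(z)$, and the only points needing care are keeping the inequality strict --- guaranteed by $f(n) > f(z)$ and $N \ge 1$ --- and the bookkeeping that rewrites the Cauchy--Schwarz output as the stated double sums, in particular the factorization of $A(n)^2$ under the constraint $[k_1,k_2] \mid f(n)$.
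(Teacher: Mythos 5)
Your proof is correct and follows essentially the same approach as the paper: lower-bound each $\log f(n)$ by $\log f(z)$, decompose $\log f(n)$ via $\sum_{k\mid f(n)}\Lambda(k)$ split at $P^+(k)\lessgtr\sqrt{y}$, apply Cauchy--Schwarz to the $P^+(k)\le\sqrt{y}$ part, and rewrite the resulting sums as the stated $V$ and $W$ by interchanging the order of summation (you work "from $V$, $W$ back to sums over $S$" where the paper goes the other way, but this is purely a presentational difference). Your explicit remarks that the constraints $k\le f(x)$ are redundant and that the strict inequality requires $N\ge1$ are both correct and in fact slightly more careful than the paper, which leaves these implicit.
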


\begin{proof}
The idea of the proof is borrowed from Cassels \cite{Cassels1961}. 
It is deduced from \eqref{eq:09291328} that $\log{f(n)}>\log{f(z)}>0$ for $n>z>T_0(f)$. 
Thus we have
\begin{align*}
\Psi_f(x,y)-\Psi_f(z,y)
= \sum_{\substack{z<n \leq x \\ P^+(f(n)) \leq y}} 1 
< \frac{1}{\log{f(z)}} \sum_{\substack{z<n \leq x \\ P^+(f(n)) \leq y}} \log{f(n)}
\end{align*}
by the definition of $\Psi_f(x,y)$. 
Considering the prime factorization of $f(n)$, we obtain
\begin{align*}
\sum_{\substack{z<n \leq x \\ P^+(f(n)) \leq y}} \log{f(n)}
= \sum_{\substack{z<n \leq x \\ P^+(f(n)) \leq y}} 
\sum_{\substack{p, v \\ p^v \mid f(n)}} \log{p}, 
\end{align*}
where $\sum_{p, v}$ stands for the double sum over prime numbers $p$ and integers $v \geq1$. 
Let $\Lambda(n)$ denote as usual the von Mangoldt function. 
Then we have
\begin{align*}
\sum_{\substack{z<n \leq x \\ P^+(f(n)) \leq y}} 
\sum_{\substack{p, v \\ p^v \mid f(n)}} \log{p}
&= \sum_{\substack{z<n \leq x \\ P^+(f(n)) \leq y}} 
\sum_{\substack{1 \leq k \leq f(x) \\ k \mid f(n) \\ P^+(k) \leq y}} \Lambda(k) \\
&= \sum_{\substack{z<n \leq x \\ P^+(f(n)) \leq y}} 
\sum_{\substack{1 \leq k \leq f(x) \\ k \mid f(n) \\ \sqrt{y} < P^+(k) \leq y}} \Lambda(k)
+ \sum_{\substack{z<n \leq x \\ P^+(f(n)) \leq y}} 
\sum_{\substack{1 \leq k \leq f(x) \\ k \mid f(n) \\ P^+(k) \leq \sqrt{y}}} \Lambda(k). 
\end{align*}
From the above, we see that the inequality
\begin{align}\label{eq:09281553}
\Psi_f(x,y)-\Psi_f(z,y)
&< \frac{1}{\log{f(z)}} 
\sum_{\substack{z<n \leq x \\ P^+(f(n)) \leq y}} 
\sum_{\substack{1 \leq k \leq f(x) \\ k \mid f(n) \\ \sqrt{y} < P^+(k) \leq y}} \Lambda(k) \\
&\quad
+ \frac{1}{\log{f(z)}}
\sum_{\substack{z<n \leq x \\ P^+(f(n)) \leq y}} 
\sum_{\substack{1 \leq k \leq f(x) \\ k \mid f(n) \\ P^+(k) \leq \sqrt{y}}} \Lambda(k) \nonumber 
\end{align}
holds. 
The first sum of the right-hand side of \eqref{eq:09281553} is evaluated as 
\begin{align}\label{eq:09281558}
\sum_{\substack{z<n \leq x \\ P^+(f(n)) \leq y}} 
\sum_{\substack{1 \leq k \leq f(x) \\ k \mid f(n) \\ \sqrt{y} < P^+(k) \leq y}} \Lambda(k)
= \sum_{\substack{1 \leq k \leq f(x) \\ \sqrt{y} < P^+(k) \leq y}} \Lambda(k)
\sum_{\substack{z<n \leq x \\ k \mid f(n) \\ P^+(f(n)) \leq y}} 1
\end{align}
by changing the order of the summations. 
As for the second sum of the right-hand side of \eqref{eq:09281553}, we apply the Cauchy--Schwarz inequality to deduce
\begin{align*}
\sum_{\substack{z<n \leq x \\ P^+(f(n)) \leq y}} 
\sum_{\substack{1 \leq k \leq f(x) \\ k \mid f(n) \\ P^+(k) \leq \sqrt{y}}} \Lambda(k)
\leq \sqrt{\Psi_f(x,y)-\Psi_f(z,y)}
\sqrt{\sum_{\substack{z<n \leq x \\ P^+(f(n)) \leq y}} 
\Bigg(\sum_{\substack{1 \leq k \leq f(x) \\ k \mid f(n) \\ P^+(k) \leq \sqrt{y}}} \Lambda(k)\Bigg)^2}. 
\end{align*}
Furthermore, we have the identity
\begin{align*}
\sum_{\substack{z<n \leq x \\ P^+(f(n)) \leq y}} 
\Bigg(\sum_{\substack{1 \leq k \leq f(x) \\ k \mid f(n) \\ P^+(k) \leq \sqrt{y}}} \Lambda(k)\Bigg)^2
= \sum_{\substack{1 \leq k_1,k_2 \leq f(x) \\ P^+(k_1) \leq \sqrt{y} \\ P^+(k_2) \leq \sqrt{y}}} \Lambda(k_1) \Lambda(k_2)
\sum_{\substack{z<n \leq x \\ [k_1,k_2] \mid f(n) \\ P^+(f(n)) \leq y}} 1
\end{align*}
by expanding the square and changing the order of summations. 
Hence we arrive at
\begin{align}\label{eq:09281559}
&\sum_{\substack{z<n \leq x \\ P^+(f(n)) \leq y}} 
\sum_{\substack{1 \leq k \leq f(x) \\ k \mid f(n) \\ P^+(k) \leq \sqrt{y}}} \Lambda(k) \\
&\leq \sqrt{\Psi_f(x,y)-\Psi_f(z,y)}
\sqrt{\sum_{\substack{1 \leq k_1,k_2 \leq f(x) \\ P^+(k_1) \leq \sqrt{y} \\ P^+(k_2) \leq \sqrt{y}}} \Lambda(k_1) \Lambda(k_2)
\sum_{\substack{z<n \leq x \\ [k_1,k_2] \mid f(n) \\ P^+(f(n)) \leq y}} 1}. \nonumber 
\end{align}
Then the desired result follows by inserting \eqref{eq:09281558} and \eqref{eq:09281559} to inequality \eqref{eq:09281553}. 
\end{proof}

\section{An inductive process}\label{sec:3}
By Proposition \ref{prop:2.1}, the next task to deduce an upper bound of $\Psi_f(x,y)-\Psi_f(z,y)$ is evaluating the sum
\begin{align*}
S(\kappa)
= \sum_{\substack{z<n \leq x \\ \kappa \mid f(n) \\ P^+(f(n)) \leq y}} 1
\end{align*}
with $\kappa=k$ and $\kappa=[k_1,k_2]$. 
For every $\kappa \in \mathbb{Z}_{\geq1}$, we define $\omega_f(\kappa)$ as the number of the zeros of the polynomial $f(t)$ in the residue class ring $\mathbb{Z}/\kappa \mathbb{Z}$, that is, 
\begin{align}\label{eq:10101628}
\omega_f(\kappa)
= \# \left\{ u ~(\bmod\,{\kappa}) ~\middle|~ f(u) \equiv 0 ~(\bmod\,{\kappa}) \right\}. 
\end{align}
Recall that $f(j \kappa+u) \equiv f(u) ~(\bmod\,{\kappa})$ for any $j \in \mathbb{Z}$. 
Therefore we obtain 
\begin{align}\label{eq:09300001}
S(\kappa)
\leq \sum_{\substack{z<n \leq x \\ \kappa \mid f(n)}} 1
= \sum_{\substack{u \hspace{-2mm} \pmod{\kappa} \\ f(u) \equiv 0 \hspace{-2mm} \pmod{\kappa}}} 
\sum_{\substack{z<n \leq x \\ n \equiv u \hspace{-2mm} \pmod{\kappa}}} 1 
\leq \omega_f(\kappa) \left(\frac{x-z}{\kappa}+1\right),
\end{align}
but this estimate is not good when $y$ is small since we have ignored the condition $P^+(f(n)) \leq y$ in the first inequality. 
The purpose of this section is to evaluate $S(\kappa)$ by using $S(\kappa \lambda)$ for $\lambda \geq1$, which enables us to refine inductively the estimates of $V$ and $W$ such that \eqref{eq:09272331} holds. 

\begin{lemma}\label{lem:3.1}
Let $f(t) \in \mathbb{Z}[t]$ be a polynomial such that $f(t) \to\infty$ as $t \to\infty$, and $x,y,z \in \mathbb{R}$ with $x>z>T_0(f)$ and $y \geq1$. 
Suppose that $f(z)>x$ is satisfied, and put $h=x-z$. 
Then we have 
\begin{align}\label{eq:09291603}
\sum_{\substack{z<n \leq x \\ \kappa \mid f(n) \\ P^+(f(n)) \leq y}} 1
&< \frac{1}{\log{\frac{f(z)}{x}}} \sum_{\substack{1 \leq \lambda \leq h/\kappa \\ P^+(\lambda) \leq y}} \Lambda(\lambda)
\sum_{\substack{z<n \leq x \\ \kappa \lambda \mid f(n) \\ P^+(f(n)) \leq y}}  1 \\
&\quad
+ \frac{1}{\log{\frac{f(z)}{x}}} \sum_{\substack{h/\kappa < \lambda \leq f(x) \\ P^+(\lambda) \leq y}} \Lambda(\lambda)
\omega_f(\kappa \lambda) \nonumber
\end{align}
for any $\kappa \in \mathbb{Z}$ with $1 \leq \kappa \leq h$. 
\end{lemma}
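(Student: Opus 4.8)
The plan is to rerun the argument from the proof of Proposition~\ref{prop:2.1}, but to take advantage of the extra hypothesis $f(z)>x$ by weighting each admissible $n$ with $\log(f(n)/\kappa)$ instead of $\log f(n)$. First I would fix $\kappa$ with $1\leq\kappa\leq h$ and note that whenever $z<n\leq x$, $\kappa\mid f(n)$ and $P^+(f(n))\leq y$, the quotient $f(n)/\kappa$ is a positive integer; moreover $f(n)>f(z)$ by the monotonicity \eqref{eq:09291328} (since $n>z>T_0(f)$), and $\kappa\leq h<x$, so $f(n)/\kappa>f(z)/x>1$. In particular $\log{\frac{f(z)}{x}}>0$ and $1<\log(f(n)/\kappa)/\log(f(z)/x)$, which upon summation gives
\begin{align*}
\sum_{\substack{z<n\leq x\\ \kappa\mid f(n)\\ P^+(f(n))\leq y}}1
<\frac{1}{\log{\frac{f(z)}{x}}}\sum_{\substack{z<n\leq x\\ \kappa\mid f(n)\\ P^+(f(n))\leq y}}\log{\frac{f(n)}{\kappa}}.
\end{align*}

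Next I would expand the weight by the identity $\log m=\sum_{\lambda\mid m}\Lambda(\lambda)$ applied to $m=f(n)/\kappa$. Every divisor $\lambda$ of $f(n)/\kappa$ satisfies $P^+(\lambda)\leq P^+(f(n))\leq y$ and $\lambda\leq f(n)/\kappa\leq f(x)$, and, given $\kappa\mid f(n)$, the condition $\lambda\mid f(n)/\kappa$ is the same as $\kappa\lambda\mid f(n)$. Splitting the divisors at $\lambda=h/\kappa$ and interchanging the order of summation (the constraint $P^+(\lambda)\leq y$ on the outer sums being harmless, as it is automatically satisfied whenever the inner sum is non-empty) should yield
\begin{align*}
\sum_{\substack{z<n\leq x\\ \kappa\mid f(n)\\ P^+(f(n))\leq y}}\log{\frac{f(n)}{\kappa}}
&=\sum_{\substack{1\leq\lambda\leq h/\kappa\\ P^+(\lambda)\leq y}}\Lambda(\lambda)\sum_{\substack{z<n\leq x\\ \kappa\lambda\mid f(n)\\ P^+(f(n))\leq y}}1\\
&\quad+\sum_{\substack{h/\kappa<\lambda\leq f(x)\\ P^+(\lambda)\leq y}}\Lambda(\lambda)\sum_{\substack{z<n\leq x\\ \kappa\lambda\mid f(n)\\ P^+(f(n))\leq y}}1.
\end{align*}
The first term is already the first sum in \eqref{eq:09291603}, up to the factor $1/\log(f(z)/x)$.

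It then remains to bound the inner count in the second term. For $\lambda>h/\kappa$ one has $\kappa\lambda>h=x-z$, so the interval $(z,x]$ is shorter than $\kappa\lambda$ and contains at most one integer in each residue class modulo $\kappa\lambda$; arguing exactly as in \eqref{eq:09300001}, via $f(j\kappa\lambda+u)\equiv f(u)\pmod{\kappa\lambda}$, this gives
\begin{align*}
\sum_{\substack{z<n\leq x\\ \kappa\lambda\mid f(n)\\ P^+(f(n))\leq y}}1
&\leq\sum_{\substack{u\hspace{-2mm}\pmod{\kappa\lambda}\\ f(u)\equiv0\hspace{-2mm}\pmod{\kappa\lambda}}}\ \sum_{\substack{z<n\leq x\\ n\equiv u\hspace{-2mm}\pmod{\kappa\lambda}}}1\\
&\leq\omega_f(\kappa\lambda).
\end{align*}
Inserting this bound and dividing through by $\log(f(z)/x)$ produces \eqref{eq:09291603}.

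The only delicate point I expect is the bookkeeping around the range of $\lambda$: one must recognise that cutting the divisor sum precisely at $h/\kappa$ is what forces $\kappa\lambda>h$ in the second piece, so that the short-interval count collapses to the clean quantity $\omega_f(\kappa\lambda)$, while the first piece is left in the shape $\sum\Lambda(\lambda)\,S(\kappa\lambda)$ needed to feed the induction of the next section. The hypotheses enter exactly here: $f(z)>x$ makes both $\log(f(z)/x)$ and $\log(f(n)/\kappa)$ positive, and $\kappa\leq h$ guarantees $h/\kappa\geq1$ as well as $\kappa\lambda>h$ for every $\lambda>h/\kappa$. As in Proposition~\ref{prop:2.1}, the strict inequality is immediate once the left-hand side is non-empty, which is the only case of interest.
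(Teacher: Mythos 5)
Your proposal is correct and follows essentially the same route as the paper: weight by $\log(f(n)/\kappa)$ using $f(z)>x$ and $\kappa\leq h$ to make the weight exceed $\log(f(z)/x)>0$, expand via $\log m=\sum_{\lambda\mid m}\Lambda(\lambda)$, swap sums and split at $\lambda=h/\kappa$, and bound the tail by $\omega_f(\kappa\lambda)$ using the short-interval count from \eqref{eq:09300001}. The only cosmetic difference is that you explicitly note the condition $P^+(\lambda)\leq y$ is automatic from $P^+(f(n))\leq y$, which the paper leaves implicit.
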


\begin{proof}
Since we have $\kappa \leq h<x$ and $f(z)>x$, we find that $\log{\frac{f(n)}{\kappa}}>\log{\frac{f(z)}{x}}>0$ for $n>z>T_0(f)$ by \eqref{eq:09291328}. 
Therefore we obtain the inequality
\begin{align*}
\sum_{\substack{z<n \leq x \\ \kappa \mid f(n) \\ P^+(f(n)) \leq y}} 1
< \frac{1}{\log{\frac{f(z)}{x}}}
\sum_{\substack{z<n \leq x \\ \kappa \mid f(n) \\ P^+(f(n)) \leq y}} \log{\frac{f(n)}{\kappa}}. 
\end{align*}
Considering the prime factorization of $f(n)/\kappa$, we have
\begin{align*}
\sum_{\substack{z<n \leq x \\ \kappa \mid f(n) \\ P^+(f(n)) \leq y}} \log{\frac{f(n)}{\kappa}}
= \sum_{\substack{z<n \leq x \\ \kappa \mid f(n) \\ P^+(f(n)) \leq y}} 
\sum_{\substack{1 \leq \lambda \leq f(x) \\ \lambda \mid f(n)/\kappa \\ P^+(\lambda) \leq y}} \Lambda(\lambda)
\end{align*}
by the same argument as in the proof of Proposition \ref{prop:2.1}. 
Furthermore, we have
\begin{align*}
\sum_{\substack{z<n \leq x \\ \kappa \mid f(n) \\ P^+(f(n)) \leq y}} 
\sum_{\substack{1 \leq \lambda \leq f(x) \\ \lambda \mid f(n)/\kappa \\ P^+(\lambda) \leq y}} \Lambda(\lambda)
&= \sum_{\substack{1 \leq \lambda \leq f(x) \\ P^+(\lambda) \leq y}} \Lambda(\lambda)
\sum_{\substack{z<n \leq x \\ \kappa \lambda \mid f(n) \\ P^+(f(n)) \leq y}}  1 \\
&= \sum_{\substack{1 \leq \lambda \leq h/\kappa \\ P^+(\lambda) \leq y}} \Lambda(\lambda)
\sum_{\substack{z<n \leq x \\ \kappa \lambda \mid f(n) \\ P^+(f(n)) \leq y}}  1 
+ \sum_{\substack{h/\kappa < \lambda \leq f(x) \\ P^+(\lambda) \leq y}} \Lambda(\lambda)
\sum_{\substack{z<n \leq x \\ \kappa \lambda \mid f(n) \\ P^+(f(n)) \leq y}}  1
\end{align*}
by changing the order of summations and dividing the sum over $\lambda$. 
The sum over $\lambda$ with $1 \leq \lambda \leq h/\kappa$ yields the first term of the right-hand side of \eqref{eq:09291603}. 
On the other hand, the sum over remaining $\lambda$ is estimated as
\begin{align*}
\sum_{\substack{h/\kappa < \lambda \leq f(x) \\ P^+(\lambda) \leq y}} \Lambda(\lambda)
\sum_{\substack{z<n \leq x \\ \kappa \lambda \mid f(n) \\ P^+(f(n)) \leq y}}  1
\leq \sum_{\substack{h/\kappa < \lambda \leq f(x) \\ P^+(\lambda) \leq y}} \Lambda(\lambda)
\omega_f(\kappa \lambda)
\end{align*}
since the number of integers $n$ with $z<n \leq x$ and $\kappa \lambda \mid f(n)$ is less than $\omega_f(\kappa \lambda)$ due to $h=x-z< \kappa \lambda$. 
Hence we obtain the second term of the right-hand side of \eqref{eq:09291603}, and the proof is completed. 
\end{proof}

Lemma \ref{lem:3.1} was developed by Timofeev \cite{Timofeev1977}. 
He used this result implicitly in his study on the upper bound for $\Psi_f(x,y)$. 
In this paper, we apply Lemma \ref{lem:3.1} to refine Proposition \ref{prop:2.1} as follows. 

\begin{proposition}\label{prop:3.2}
Let $f(t) \in \mathbb{Z}[t]$ be a polynomial such that $f(t) \to\infty$ as $t \to\infty$, and $x,y,z \in \mathbb{R}$ with $x>z>T_0(f)$ and $y \geq1$. 
Suppose that $f(z)>x$ is satisfied, and put $h=x-z$. 
Then inequality \eqref{eq:09272331} holds with
\begin{align}\label{eq:09291709}
V
= V_m^+ +\sum_{i=1}^{m} V_i^-
\quad\text{and}\quad
W
= W_m^+ +\sum_{i=1}^{m} W_i^-
\end{align}
for every $m \in \mathbb{Z}_{\geq1}$, where $V_m^+$, $W_m^+$, $V_i^-$, $W_i^-$ for $i=1,\ldots,m$ are defined as follows. 
Firstly, we define $V_m^+$ and $W_m^+$ as
\begin{align*}
V_m^+
&= \frac{1}{\log{f(z)} \left(\log{\frac{f(z)}{x}}\right)^{m-1}} 
\sum_{\substack{k_1,\ldots,k_m \geq 1 \\ k_1 \cdots k_m \leq h \\ \sqrt{y} < P^+(k_1) \leq y \\ P^+(k_j) \leq y ~(j \geq 2)}} 
\Lambda(k_1) \cdots \Lambda(k_m) 
\sum_{\substack{z<n \leq x \\ k_1 \cdots k_m \mid f(n) \\ P^+(f(n)) \leq y}} 1, \\
W_m^+
&= \frac{1}{(\log{f(z)})^2 \left(\log{\frac{f(z)}{x}}\right)^{m-1}} \\
&\quad \times
\sum_{\substack{k_1,\ldots,k_{m+1} \geq 1 \\ [k_1,k_2] k_3 \cdots k_{m+1} \leq h \\ P^+(k_j) \leq \sqrt{y} ~(j=1,2) \\ P^+(k_j) \leq y ~(j \geq 3)}} 
\Lambda(k_1) \cdots \Lambda(k_{m+1}) 
\sum_{\substack{z<n \leq x \\ [k_1,k_2] k_3 \cdots k_{m+1} \mid f(n) \\ P^+(f(n)) \leq y}} 1
\end{align*}
with the convention $k_3 \cdots k_{m+1}=1$ for $m=1$. 
Secondly, we define $V_i^-$ as
\begin{align*}
V_i^-
= \frac{1}{\log{f(z)} \left(\log{\frac{f(z)}{x}}\right)^{i-1}} 
\sum_{\substack{k_1,\ldots,k_i \geq 1 \\ k_1 \cdots k_{i-1} \leq h,~ \frac{h}{k_1 \cdots k_{i-1}}<k_i \leq f(x) \\ P^+(k_j) \leq y ~(\forall j)}} 
\Lambda(k_1) \cdots \Lambda(k_i) \omega_f(k_1 \cdots k_i), 
\end{align*}
for every $1 \leq i \leq m$ with the convention $k_1 \cdots k_{i-1}=1$ for $i=1$. 
Lastly, we define 
\begin{align*}
W_i^-
&= \frac{1}{(\log{f(z)})^2 \left(\log{\frac{f(z)}{x}}\right)^{i-1}} \\
&\quad \times
\sum_{\substack{k_1,\ldots,k_{i+1} \geq 1 \\ [k_1,k_2] k_3 \cdots k_{i} \leq h,~ \frac{h}{[k_1,k_2] k_3 \cdots k_{i}}<k_{i+1} \leq f(x) \\ P^+(k_j) \leq y ~(\forall j)}} 
\Lambda(k_1) \cdots \Lambda(k_{i+1}) \omega_f([k_1,k_2] k_3 \cdots k_{i+1}) 
\end{align*}
for every $2 \leq i \leq m$ with the convention $k_3 \cdots k_{i}=1$ for $i=2$, and 
\begin{align*}
W_1^-
= \frac{1}{(\log{f(z)})^2} \sum_{\substack{1 \leq k_1, k_2 \leq f(x) \\ [k_1,k_2]>h \\ P^+(k_j) \leq y ~(\forall j) }} 
\Lambda(k_1) \Lambda(k_2) \omega_f([k_1,k_2]). 
\end{align*}
\end{proposition}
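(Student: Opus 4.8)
The plan is to prove the proposition by induction on $m$, with Proposition \ref{prop:2.1} furnishing the base case $m=1$ and Lemma \ref{lem:3.1} providing the passage from $m$ to $m+1$. Two simple facts are used throughout. First, if $\kappa>h$, then the interval $(z,x]$ meets each residue class modulo $\kappa$ in at most one point, so that $S(\kappa)\leq\omega_f(\kappa)$; this is the estimate that already appears at the end of the proof of Lemma \ref{lem:3.1}. Second, the right-hand side of \eqref{eq:09272331} is nondecreasing in $V$ and in $W$; hence, once \eqref{eq:09272331} holds with some admissible choice, it continues to hold if $V$ and $W$ are enlarged, and in particular we may at any stage replace an inner sum $S(\kappa)$ appearing in $V$ or $W$ by an upper bound for it.

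For the base case $m=1$, we take $V$ and $W$ as in Proposition \ref{prop:2.1} and split the sum over $k$ in $V$, and the sum over $k_1,k_2$ in $W$, according to whether the relevant modulus ($k$, respectively $[k_1,k_2]$) is $\leq h$ or $>h$. The parts with modulus $\leq h$ are exactly $V_1^+$ and $W_1^+$, with the conventions $k_3\cdots k_{m+1}=1$ for $m=1$. In the parts with modulus $>h$ the inner sum is at most $\omega_f$ of that modulus by the first fact above; since in addition $\{\sqrt{y}<P^+(k)\leq y\}\subseteq\{P^+(k)\leq y\}$ and $\{P^+(k_j)\leq\sqrt{y}\}\subseteq\{P^+(k_j)\leq y\}$, these parts are bounded by $V_1^-$ and $W_1^-$ respectively. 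Thus \eqref{eq:09272331} holds with a pair dominated by $(V_1^++V_1^-,\,W_1^++W_1^-)$, and the monotonicity from the second fact gives the case $m=1$.

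For the inductive step, suppose \eqref{eq:09272331} holds with $V=V_m^++\sum_{i=1}^{m}V_i^-$ and $W=W_m^++\sum_{i=1}^{m}W_i^-$. Apply Lemma \ref{lem:3.1} to every inner sum $S(k_1\cdots k_m)$ occurring in $V_m^+$ and to every inner sum $S([k_1,k_2]k_3\cdots k_{m+1})$ occurring in $W_m^+$; the hypotheses of Lemma \ref{lem:3.1} are in force because $x>z>T_0(f)$, $y\geq1$ and $f(z)>x$ are assumed in the present proposition, and the constraint $1\leq\kappa\leq h$ on the modulus is precisely the condition $k_1\cdots k_m\leq h$ (respectively $[k_1,k_2]k_3\cdots k_{m+1}\leq h$) already imposed in the definition. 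Multiplying the resulting inequalities by the nonnegative weights $\Lambda(k_1)\cdots\Lambda(k_m)$ (respectively $\Lambda(k_1)\cdots\Lambda(k_{m+1})$), summing, and renaming the new variable $\lambda$ supplied by Lemma \ref{lem:3.1} as $k_{m+1}$ (respectively $k_{m+2}$), one checks that the portion coming from $\lambda\leq h/\kappa$, where $S(\kappa)$ is replaced by $S(\kappa\lambda)$, reassembles exactly into $V_{m+1}^+$ (respectively $W_{m+1}^+$): indeed $\lambda\leq h/(k_1\cdots k_m)$ is the same as $k_1\cdots k_{m+1}\leq h$, the fresh variable carries only the condition $P^+(k_{m+1})\leq y$, which fills a slot of type $j\geq2$ (respectively $j\geq3$), and each use of Lemma \ref{lem:3.1} inserts one further factor $1/\log(f(z)/x)$, turning the exponent $m-1$ into $m$. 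The portion coming from $\lambda>h/\kappa$, where $S(\kappa\lambda)$ is bounded by $\omega_f(\kappa\lambda)$, reassembles exactly into $V_{m+1}^-$ (respectively $W_{m+1}^-$). Hence $V_m^+\leq V_{m+1}^++V_{m+1}^-$ and $W_m^+\leq W_{m+1}^++W_{m+1}^-$; adding $\sum_{i=1}^m V_i^-$ and $\sum_{i=1}^m W_i^-$ shows that the pair $(V,W)$ for parameter $m$ is dominated by the pair for parameter $m+1$, and the monotonicity of \eqref{eq:09272331} completes the induction.

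The main point to be careful about is this last reassembly step: one must match, term by term, the index ranges, the $P^+$-conditions, the placement of $[k_1,k_2]$ among the summation variables (the extra variable in the $W$-quantities being the residue of the Cauchy--Schwarz step in Proposition \ref{prop:2.1}), and the exact powers of $\log f(z)$ and $\log(f(z)/x)$ against the definitions of $V_{m+1}^{\pm}$ and $W_{m+1}^{\pm}$. The conventions for small $m$ and small $i$ — $k_3\cdots k_{m+1}=1$ when $m=1$, $k_1\cdots k_{i-1}=1$ when $i=1$, $k_3\cdots k_i=1$ when $i=2$ — are precisely what allow the degenerate cases (no factored modulus in $V_1^-$, a bare least common multiple in $W_1^-$) to be subsumed under the general formulas.
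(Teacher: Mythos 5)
Your proof is correct and follows essentially the same route as the paper: induction on $m$, with Proposition~\ref{prop:2.1} yielding the base case and Lemma~\ref{lem:3.1} applied to each inner sum (with $\kappa=k_1\cdots k_m$, resp.\ $\kappa=[k_1,k_2]k_3\cdots k_{m+1}$, and $\lambda$ renamed $k_{m+1}$, resp.\ $k_{m+2}$) giving $V_m^+ \leq V_{m+1}^+ + V_{m+1}^-$ and $W_m^+ \leq W_{m+1}^+ + W_{m+1}^-$. You usefully make explicit the base-case splitting at $\kappa\leq h$ versus $\kappa>h$ that the paper leaves implicit; the only small imprecision is that the tail portions are \emph{dominated by} $V_{m+1}^-$ and $W_{m+1}^-$ rather than reassembling into them exactly, since the constraints $\sqrt{y}<P^+(k_1)$ (resp.\ $P^+(k_j)\leq\sqrt{y}$ for $j=1,2$) are relaxed to $P^+(k_j)\leq y$ in the definitions of $V_i^-$ and $W_i^-$ — a harmless enlargement that the paper also performs silently.
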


\begin{proof}
The proof is by induction on $m$. 
The result for $m=1$ can be directly deduced from Proposition \ref{prop:2.1}. 
Hence it suffices to show that the inequalities
\begin{align}\label{eq:09291954}
V_m^+ 
< V_{m+1}^+ +V_{m+1}^-
\quad\text{and}\quad
W_m^+ 
< W_{m+1}^+ +W_{m+1}^-
\end{align}
hold for every $m \in \mathbb{Z}_{\geq1}$ since they imply
\begin{align*}
V_m^+ +\sum_{i=1}^{m} V_i^-
< V_{m+1}^+ +\sum_{i=1}^{m+1} V_i^-
\quad\text{and}\quad
W_m^+ +\sum_{i=1}^{m} W_i^-
< W_{m+1}^+ +\sum_{i=1}^{m+1} W_i^-. 
\end{align*}
We apply Lemma \ref{lem:3.1} with $\kappa=k_1 \cdots k_m$ to derive
\begin{align*}
V_m^+
&< \frac{1}{\log{f(z)} \left(\log{\frac{f(z)}{x}}\right)^m} \\
&\quad \times
\sum_{\substack{k_1,\ldots,k_m \geq 1 \\ k_1 \cdots k_m \leq h \\ \sqrt{y} < P^+(k_1) \leq y \\ P^+(k_j) \leq y ~(j \geq 2)}} 
\Lambda(k_1) \cdots \Lambda(k_m) 
\sum_{\substack{1 \leq \lambda \leq \frac{h}{k_1 \cdots k_m} \\ P^+(\lambda) \leq y}} \Lambda(\lambda)
\sum_{\substack{z<n \leq x \\ k_1 \cdots k_m \lambda \mid f(n) \\ P^+(f(n)) \leq y}}  1 \\
&\quad
+ \frac{1}{\log{f(z)} \left(\log{\frac{f(z)}{x}}\right)^m} \\
&\qquad \times
\sum_{\substack{k_1,\ldots,k_m \geq 1 \\ k_1 \cdots k_m \leq h \\ P^+(k_j) \leq y ~(\forall j)}} 
\Lambda(k_1) \cdots \Lambda(k_m) 
\sum_{\substack{\frac{h}{k_1 \cdots k_m} < \lambda \leq f(x) \\ P^+(\lambda) \leq y}} \Lambda(\lambda)
\omega_f(k_1 \cdots k_m \lambda). 
\end{align*}
Replacing $\lambda$ by $k_{m+1}$, we see that the first and second terms of the right-hand side are equal to $V_{m+1}^+$ and $V_{m+1}^-$, respectively. 
Hence we obtain inequality \eqref{eq:09291954} for $V_m^+$. 
Similarly, we apply Lemma \ref{lem:3.1} with $\kappa=[k_1,k_2] k_3 \cdots k_{m+1}$ to derive
\begin{align*}
&W_m^+
< \frac{1}{(\log{f(z)})^2 \left(\log{\frac{f(z)}{x}}\right)^m} \\
&\quad \times
\sum_{\substack{k_1,\ldots,k_{m+1} \geq 1 \\ [k_1,k_2]k_3 \cdots k_{m+1} \leq h \\ P^+(k_j) \leq \sqrt{y} ~(j=1,2) \\ P^+(k_j) \leq y ~(j \geq 3)}} 
\Lambda(k_1) \cdots \Lambda(k_{m+1}) 
\sum_{\substack{1 \leq \lambda \leq \frac{h}{[k_1,k_2] k_3 \cdots k_{m+1}} \\ P^+(\lambda) \leq y}} \Lambda(\lambda)
\sum_{\substack{z<n \leq x \\ [k_1,k_2] k_3 \cdots k_{m+1} \lambda \mid f(n) \\ P^+(f(n)) \leq y}}  1 \\
&\quad
+ \frac{1}{(\log{f(z)})^2 \left(\log{\frac{f(z)}{x}}\right)^m} \\
&\qquad \times
\sum_{\substack{k_1,\ldots,k_{m+1} \geq 1 \\ [k_1,k_2]k_3 \cdots k_{m+1} \leq h \\ P^+(k_j) \leq y ~(\forall j)}} 
\Lambda(k_1) \cdots \Lambda(k_{m+1}) 
\sum_{\substack{\frac{h}{[k_1,k_2] k_3 \cdots k_{m+1}} < \lambda \leq f(x) \\ P^+(\lambda) \leq y}} \Lambda(\lambda)
\omega_f(k_1 \cdots k_m \lambda). 
\end{align*}
In this case, we replace $\lambda$ with $k_{m+2}$. 
Then we see that the first and second terms of the right-hand side are equal to $W_{m+1}^+$ and $W_{m+1}^-$, respectively. 
Then inequality \eqref{eq:09291954} follows also for $W_m^+$.
\end{proof}

\section{Proof of Theorem \ref{thm:1.1}}\label{sec:4}
Before proceeding to the completion of the proof of Theorem \ref{thm:1.1}, we notice that we may suppose that the polynomial $f(t)$ is a product of distinct irreducible factors $f_1(t), \ldots, f_g(t)$ over $\mathbb{Z}[t]$ of degrees $d_1, \ldots, d_g \geq1$, respectively. 
Indeed, if $f(t)$ is factorized in general as 
\begin{align*}
f(t)
= A f_1(t)^{e_1} \cdots f_g(t)^{e_g}
\end{align*}
with $A, e_1,\ldots,e_g \in \mathbb{Z}$ such that $A \neq0$ and $e_j \geq1$, then it is deduced from the definition of $\Psi_f(x,y)$ that $\Psi_{\rad(f)}(x,y) = \Psi_{f}(x,y)$ for $y>|A|$, where we define
\begin{align*}
\rad(f(t))
= f_1(t) \cdots f_g(t). 
\end{align*}
Then we prove the following lemma on sums involving $\omega_f(k)$ defined as \eqref{eq:10101628}. 

\begin{lemma}\label{lem:4.1}
Suppose that $f(t) \in \mathbb{Z}[t]$ is a product of distinct irreducible factors $f_1(t), \ldots, f_g(t)$ over $\mathbb{Z}[t]$ of degrees $d_1, \ldots, d_g \geq1$, respectively. 
Then we have
\begin{align}
\sum_{\substack{1 \leq k \leq x \\ P^+(k) \leq y}} \frac{\Lambda(k)}{k} \omega_f(k)
&\leq g \log{y}+O(1), \label{eq:09292105} \\ 
\sum_{\substack{1 \leq k \leq x \\ \sqrt{y}< P^+(k) \leq y}} \frac{\Lambda(k)}{k} \omega_f(k)
&\leq \frac{g}{2} \log{y}+O(1), \label{eq:09292108} \\ 
\sum_{\substack{1 \leq k \leq x \\ P^+(k) \leq y}} \frac{(2\log{k}-\Lambda(k)) \Lambda(k)}{k} \omega_f(k)
&\leq \frac{g}{2} (\log{y})^2+O(\log{y}), \label{eq:09292106} \\ 
\sum_{\substack{1 \leq k \leq x \\ P^+(k) \leq y}} \Lambda(k) \omega_f(k)
&=O\left(y \frac{\log{x}}{\log{y}}\right) \label{eq:09292107}
\end{align}
for any $x, y \geq2$, where all implied constants depend only on $f(t)$.
\end{lemma}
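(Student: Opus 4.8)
The plan is to prove all four estimates by reducing sums over prime powers $k=p^\nu$ to sums over primes $p$, using the multiplicativity of $\omega_f$ and its behaviour at prime powers. The key structural facts are: (i) $\omega_f$ is multiplicative, so $\omega_f(k_1 k_2)=\omega_f(k_1)\omega_f(k_2)$ whenever $(k_1,k_2)=1$, and since $\Lambda(k)$ is supported on prime powers, each sum is really a sum over $p^\nu$; (ii) for $f=f_1\cdots f_g$ with distinct irreducible factors, $\omega_f(p)\le \sum_{j=1}^g \omega_{f_j}(p) \le \sum_j d_j \le d$ for all $p$, but more importantly $\sum_{p\le y}\frac{\log p}{p}\,\omega_f(p)\le g\log y+O(1)$, which is the arithmetic heart of the matter. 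This last bound follows from the prime ideal theorem (or Landau's prime ideal theorem / Chebotarev) applied to each number field $K_j=\mathbb{Q}[t]/(f_j)$: the number of degree-one prime ideals of $K_j$ above $p$ equals $\omega_{f_j}(p)$ for all but finitely many $p$ (those dividing the discriminant or leading coefficient), and $\sum_{\mathfrak{p},\,N\mathfrak{p}=p\le y}\frac{\log p}{p}=\log y+O(1)$ by the prime ideal theorem with classical error term; summing over the $g$ factors gives $g\log y+O(1)$, with the $O(1)$ absorbing the finitely many bad primes where $\omega_{f_j}(p)$ could be as large as $d_j$.

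For \eqref{eq:09292105}, I would split $\sum_{p^\nu\le x,\,p\le y}\frac{\log p}{p^\nu}\omega_f(p^\nu)$ into the $\nu=1$ part and the $\nu\ge2$ part. The $\nu=1$ part is exactly the prime ideal sum above, contributing $g\log y+O(1)$. For $\nu\ge2$ one uses the crude bound $\omega_f(p^\nu)\le \nu \deg f \cdot p^{\nu-1}$ (or even just that $\omega_f(p^\nu)$ is bounded by a constant times $p^{\nu-1}$ uniformly — this is a standard lemma, e.g. Nagel, and for $p$ not dividing the discriminant one has $\omega_f(p^\nu)=\omega_f(p)\le d$); then $\sum_{\nu\ge2}\sum_p \frac{\log p}{p^\nu}\omega_f(p^\nu) = O(\sum_p \frac{\log p}{p})=O(1)$ because the extra factor $p^{\nu-1}$ in the numerator cancels against $p^\nu$ in the denominator, leaving $\sum_{\nu\ge2}p^{-1}\log p$ which converges when summed over $\nu$ and is $O(1)$ over all $p$. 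Estimate \eqref{eq:09292108} is the same computation restricted to $\sqrt y< p\le y$, where the $\nu\ge2$ terms are vacuous for large $y$ (since $p^2>y^{... }$ is not the constraint — rather $k\le x$, but the main point is that the prime part gives $\sum_{\sqrt y<p\le y}\frac{\log p}{p}=\log y-\log\sqrt y+O(1)=\tfrac12\log y+O(1)$, times $g$).

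For \eqref{eq:09292106}, note $2\log k-\Lambda(k)=\log k + (\log k-\Lambda(k))$; on prime powers $k=p^\nu$ one has $\log k=\nu\log p$ and $\Lambda(k)=\log p$, so $2\log k-\Lambda(k)=(2\nu-1)\log p$. Thus the sum is $\sum_{p^\nu\le x,\,p\le y}\frac{(2\nu-1)(\log p)^2}{p^\nu}\omega_f(p^\nu)$; the $\nu=1$ term is $\sum_{p\le y}\frac{(\log p)^2}{p}\omega_f(p)$, and by partial summation from the prime ideal sum $\sum_{p\le t}\frac{\log p}{p}\omega_f(p)=g\log t+O(1)$ this equals $\int_2^y (g/t)\,dt\cdot$... more precisely $\int_2^y \log t\, d(g\log t+O(1)) = \tfrac g2(\log y)^2+O(\log y)$; the $\nu\ge2$ terms contribute $O(\sum_p \frac{(\log p)^2}{p})$ per fixed $\nu$, wait — one must be slightly careful, but using $\omega_f(p^\nu)=O(p^{\nu-1})$ the $\nu\ge 2$ sum becomes $O(\sum_p\frac{(\log p)^2}{p})$ which is $O((\log y)^2)$, too big, so instead one restricts: the bound needs the $\nu\ge2$ part to be $O(\log y)$, which holds because for $p^\nu\le x$ with $\nu\ge2$ we have $p\le x^{1/2}$, and... actually the safe route is $\sum_{\nu\ge 2}\sum_{p}\frac{(2\nu-1)(\log p)^2}{p^\nu}\cdot C p^{\nu-1}=C\sum_p (\log p)^2/p\sum_{\nu\ge2}(2\nu-1)p^{-(\nu-1)}$ — hmm this is $O(\sum_p (\log p)^2/p^2\cdot\text{stuff})=O(1)$ actually since the geometric-type series in $\nu$ starting at $\nu=2$ gives an extra $1/p$. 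So it is $O(1)$, even better than claimed. Finally \eqref{eq:09292107} is the only sum without the $1/k$ weight: here $\sum_{p^\nu\le x,\,p\le y}\log p\cdot\omega_f(p^\nu)$; bounding $\omega_f(p^\nu)\le d p^{\nu-1}$ and $\omega_f(p)\le d$, the $\nu=1$ part is $\le d\sum_{p\le y}\log p = d\cdot\vartheta(y)=O(y)$ by Chebyshev, which is already $O(y\log x/\log y)$ trivially, and the higher powers $p^\nu\le x$ give $O(\sum_{\nu\ge 2}\sum_{p\le x^{1/\nu}}\log p\cdot p^{\nu-1})=O(\sum_{\nu\ge2}x^{1-1/\nu}\cdot\frac{\log x}{\nu})$; the dominant $\nu=2$ term is $O(\sqrt x\log x)$ which is $O(y\log x/\log y)$ only if $y\gg \sqrt x$ — so one has to be more careful and note $\sum_{p\le z,\,p\le y}$ forces $p\le\min(z,y)$, hence for $\nu=2$ the sum is $O(\min(\sqrt x,y)\log x)=O(y\log x)$, and one divides... actually the factor $\log x/\log y$ comes from the number of admissible $\nu$'s being $O(\log x/\log 2)$ combined with each contributing $O(y)$ after using $p\le y$: $\sum_{\nu\le \log_2 x}\sum_{p\le y}\log p\cdot p^{\nu-1}$ is wrong since $p^{\nu-1}$ grows — the correct bound uses that $p^\nu\le x$ and $p\le y$ together give $\omega_f(p^\nu)\le d p^{\nu-1}\le d x/p\le dx/2$, too weak; I would instead write $\sum_{p\le y}\sum_{\nu: p^\nu\le x}\log p\cdot\omega_f(p^\nu)\le d\sum_{p\le y}\log p\sum_{\nu\le \log x/\log p}p^{\nu-1}\le d\sum_{p\le y}\log p\cdot\frac{x/p}{1-1/p}=O\big(x\sum_{p\le y}\frac{\log p}{p}\big)=O(x\log y)$ — still not matching. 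The cleanest is: $\sum_{p^\nu\le x}\Lambda(p^\nu)\omega_f(p^\nu)$ with $p\le y$; since $\omega_f(k)\le d$ when $(k,\mathrm{disc})=1$ is false for prime powers, instead use $\omega_f(p^\nu)\le \deg(f)=d$ for squarefree part but $\le$ something like $d\cdot p^{\lfloor\nu/2\rfloor}$... I will rely on the standard estimate $\sum_{k\le x}\Lambda(k)\omega_f(k)\ll x$ valid when restricted appropriately, then insert the $P^+(k)\le y$ constraint which, by a Rankin-type / smooth-numbers argument, costs a factor $O(\log x/\log y)$ — this last estimate is the one I expect to be the main obstacle, as it genuinely requires either a smooth-number sieve bound or a direct manipulation showing that the $y$-smooth prime powers up to $x$ carry total $\Lambda\cdot\omega_f$-mass $O(y\log x/\log y)$, rather than the naive $O(x)$. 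All the other three estimates are routine consequences of the prime ideal theorem plus partial summation, and the $\nu\ge2$ prime-power tails are harmless in every case.
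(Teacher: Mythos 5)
Your plan shares the paper's skeleton (split into the $\nu=1$ prime part, handled by Nagel's classical estimate $\sum_{p\le y}\frac{\log p}{p}\,\omega_{f_j}(p)=\log y+O(1)$, plus a tail for $\nu\ge 2$), but the tail treatment has a genuine gap, and \eqref{eq:09292107} is not actually proved. The bound $\omega_f(p^\nu)\le C p^{\nu-1}$ is far too weak: plugging it into the $\nu\ge 2$ part of \eqref{eq:09292105} gives $\sum_{\nu\ge2}\sum_p\frac{\log p}{p^\nu}\cdot Cp^{\nu-1}=C\sum_p\frac{\log p}{p}\,\#\{\nu\ge2:p^\nu\le x\}$, and you then assert ``$\sum_p \frac{\log p}{p}=O(1)$'' --- but by Mertens this sum diverges like $\log y$, so the tail is not $O(1)$ with this bound. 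In the $\nu\ge2$ argument for \eqref{eq:09292106} the same bound is fed in with an algebraic slip (the factor $p^{\nu-1}$ from $\omega_f$ cancels $p^\nu$ to leave $1/p$, so there is no leftover $p^{-(\nu-1)}$ as you write; one gets $\sum_p\frac{(\log p)^2}{p}\sum_{\nu\ge2}(2\nu-1)$, whose inner sum is infinite without the $p^\nu\le x$ cutoff, and even with it the estimate is not $O(1)$). And for \eqref{eq:09292107} you explicitly flag that you have no proof and appeal to a vague Rankin argument.

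The missing ingredient is the \emph{uniform} bound for squarefree $f$, due to Huxley: $\omega_f(p^\nu)\le d\,p^{\theta(p)/2}$ where $p^{\theta(p)}\,\|\,\Delta_f$, hence $\omega_f(p^\nu)\le d\sqrt{\Delta_f}$ for all $p$ and $\nu$. (Equivalently, Hensel's lemma gives $\omega_f(p^\nu)=\omega_f(p)\le d$ for $p\nmid\Delta_f$, and for the finitely many $p\mid\Delta_f$ one uses Huxley.) With this, the $\nu\ge2$ tails of the first three estimates become $\ll\sum_p\sum_{\nu\ge2}\frac{\log p}{p^\nu}\ll 1$ (or $\ll\log y$ for \eqref{eq:09292106}) immediately, and \eqref{eq:09292107} follows in one line: $\sum_{p\le y}\sum_{p^\nu\le x}(\log p)\,\omega_f(p^\nu)\le d\sqrt{\Delta_f}\sum_{p\le y}\bigl[\tfrac{\log x}{\log p}\bigr]\log p\ll(\log x)\pi(y)\ll y\,\tfrac{\log x}{\log y}$. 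Your $\nu=1$ analysis and the partial-summation step for \eqref{eq:09292106} are fine; it is the lack of a boundedness statement for $\omega_f$ on prime powers that breaks the proof of all four tails and leaves \eqref{eq:09292107} completely open.
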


\begin{proof}
Let $\Delta_f$ denote the absolute value of the discriminant of $f(t)$. 
Note that $\Delta_f$ is a positive integer by the assumption on $f(t)$. 
Then we write the prime factorization of $\Delta_f$ as 
\begin{align*}
\Delta_f
= \prod_{p} p^{\theta(p)} 
\end{align*}
with $\theta(p) \in \mathbb{Z}$ such that $\theta(p)=0$ for all but finitely many of the prime numbers $p$. 
Huxley \cite{Huxley1981} proved the bound $\omega_f(p^v)\leq d p^{\theta(p)/2}$ with $d=d_1+\cdots+d_g$. 
Thus we obtain the uniform bound
\begin{align}\label{eq:09292100}
\omega_f(p^v)
\leq d \sqrt{\Delta_f}. 
\end{align}
First, we consider estimates \eqref{eq:09292105} and \eqref{eq:09292108}. 
Since the von Mangoldt function $\Lambda(n)$ is supported on prime powers, we have
\begin{align}\label{eq:09292125}
\sum_{\substack{1 \leq k \leq x \\ P^+(k) \leq y}} \frac{\Lambda(k)}{k} \omega_f(k)
&= \sum_{p \leq y} \sum_{\substack{v \geq1 \\ p^v \leq x}} \frac{\log{p}}{p^v} \omega_f(p^v) \\
&= \sum_{p \leq y} \frac{\log{p}}{p} \omega_f(p)
+ \sum_{p \leq y} \sum_{\substack{v \geq2 \\ p^v \leq x}} \frac{\log{p}}{p^v} \omega_f(p^v). \nonumber
\end{align}
Using \eqref{eq:09292100}, we see that the latter term is estimated as
\begin{align}\label{eq:09292126}
\sum_{p \leq y} \sum_{\substack{v \geq2 \\ p^v \leq x}} \frac{\log{p}}{p^v} \omega_f(p^v)
\leq d \sqrt{\Delta_f} \sum_{p} \sum_{v \geq2} \frac{\log{p}}{p^v} 
\ll 1
\end{align}
with the implied constant depending only on $d$ and $\Delta_f$.
Note that we have
\begin{align*}
\omega_f(p)
\leq \omega_{f_1}(p)+\cdots+\omega_{f_g}(p)
\end{align*}
since $p \mid f(u)$ implies $p \mid f_j(u)$ for some $j \in \{1,\ldots,g\}$. 
Furthermore, it is classically known (see \cite{Nagel1921} for example) that
\begin{align}\label{eq:10101700}
\sum_{p \leq y} \frac{\log{p}}{p} \omega_{f_j}(p)
= \log{y}+O(1) 
\end{align}
for every $j \in \{1,\ldots,g\}$ with the implied constant depends only on $K_j=\mathbb{Q}(\alpha_j)$, where $\alpha_j \in \mathbb{C}$ is a solution to the equation $f_j(t)=0$. 
Hence we have 
\begin{align}\label{eq:09292127}
\sum_{p \leq y} \frac{\log{p}}{p} \omega_f(p)
\leq g\log{y}+O(1), 
\end{align}
where the implied constant depends at most on $f(t)$.
Inserting \eqref{eq:09292126} and \eqref{eq:09292127} to \eqref{eq:09292125}, we obtain \eqref{eq:09292105}. 
One can obtain \eqref{eq:09292108} along the same line, and we omit the proof. 
Next, we prove estimate \eqref{eq:09292106} as follows. 
We have 
\begin{align*}
\sum_{\substack{1 \leq k \leq x \\ P^+(k) \leq y}} \frac{(2\log{k}-\Lambda(k)) \Lambda(k)}{k} \omega_f(k)
&= \sum_{p \leq y} \sum_{\substack{v \geq1 \\ p^v \leq x}} \frac{(2v-1)(\log{p})^2}{p^v} \omega_f(p^v) \\
&= \sum_{p \leq y} \frac{(\log{p})^2}{p} \omega_f(p) +O(1)
\end{align*}
in a way similar to \eqref{eq:09292125} and \eqref{eq:09292126}. 
Here, one can check that
\begin{align*}
\sum_{p \leq y} \frac{(\log{p})^2}{p} \omega_{f_j}(p)
= \frac{1}{2} (\log{y})^2+O(\log{y})
\end{align*}
holds for every $j \in \{1,\ldots,g\}$ by the partial summation using formula \eqref{eq:10101700}. 
Hence \eqref{eq:09292106} follows by the same argument as before. 
Finally, we obtain
\begin{align*}
\sum_{\substack{1 \leq k \leq x \\ P^+(k) \leq y}} \Lambda(k) \omega_f(k)
= \sum_{p \leq y} \sum_{\substack{v \geq1 \\ p^v \leq x}} (\log{p}) \omega_f(p^v) 
&\leq d \sqrt{\Delta_f} \sum_{p \leq y} \left[\frac{\log{x}}{\log{p}}\right] (\log{p}) \\
&\ll (\log{x}) \sum_{p \leq y} 1,  
\end{align*}
where the implied constant depends only on $d$ and $\Delta_f$. 
Then we deduce that estimate \eqref{eq:09292107} holds by the prime number theorem. 
\end{proof}

Note that the function $\omega_f(k)$ is multiplicative but is not completely multiplicative in general. 
Hence we do not expect that $\omega_f(k_1 \cdots k_m)=\omega_f(k_1) \cdots \omega_f(k_m)$ holds unless $k_1,\ldots,k_m $ are pairwise coprime.\
\footnote{
It seems to be misunderstood often in \cite{Timofeev1977} that the identity $\omega_f(k_1 \cdots k_m)=\omega_f(k_1) \cdots \omega_f(k_m)$ holds for any $k_1,\ldots,k_m \in \mathbb{Z}_{\geq1}$. 
One can fix the errors with the same argument presented in this paper using Lemma \ref{lem:4.2}. 
}
Then we use the following lemma which helps us estimate multiple sums involving $\omega_f(k_1 \cdots k_m)$. 

\begin{lemma}\label{lem:4.2}
Suppose that $f(t) \in \mathbb{Z}[t]$ is a product of distinct irreducible factors $f_1(t), \ldots, f_g(t)$ over $\mathbb{Z}[t]$ of degrees $d_1, \ldots, d_g \geq1$, respectively. 
Put $d=d_1+\cdots+d_g$, and denote by $\Delta_f$ the absolute value of the discriminant of $f(t)$. 
Then we have
\begin{align*}
\omega_f(p_1^{v_1} \cdots p_m^{v_m})
\leq (d \sqrt{\Delta_f})^{m-|S|} \prod_{j \in S} \omega_f(p_j^{v_j})
\end{align*}
for any prime numbers $p_1,\ldots,p_m$ and any integers $v_1, \ldots, v_m \geq1$, where $S$ is the subset of $\{1,\ldots,m\}$ such that $j \in S$ if and only if $p_j \nmid \Delta_f$. 
\end{lemma}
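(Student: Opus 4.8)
The plan is to deduce the inequality from three ingredients: the multiplicativity of $\omega_f$ (the Chinese Remainder Theorem), Huxley's uniform bound $\omega_f(p^v)\le d\sqrt{\Delta_f}$ from \eqref{eq:09292100}, and the standard fact that $\omega_f(q^v)$ does not depend on $v\ge1$ when $q\nmid\Delta_f$. For this last fact, if $q\nmid\Delta_f$ then the reduction of $f=f_1\cdots f_g$ modulo $q$ is separable over $\mathbb{F}_q$ (because $\mathrm{disc}(f)$ is, up to sign, a product of the $\mathrm{disc}(f_j)$ and of the $\mathrm{Res}(f_i,f_j)^2$, none of which is divisible by $q$), so every root of $f$ modulo $q$ is simple; Hensel's lemma then lifts each such root uniquely to a root modulo $q^v$ for every $v\ge1$, whence $\omega_f(q^v)=\omega_f(q)$.

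With these in hand, I would group the given prime powers according to their primes. Let $q_1,\dots,q_r$ be the distinct primes occurring among $p_1,\dots,p_m$, and for each $i$ set $s_i=\#\{j:p_j=q_i\}\ge1$ and $w_i=\sum_{j:\,p_j=q_i}v_j$, so that multiplicativity gives
\[
\omega_f(p_1^{v_1}\cdots p_m^{v_m})=\prod_{i=1}^{r}\omega_f(q_i^{w_i}).
\]
Now split this product according to whether $q_i\mid\Delta_f$. For those $i$ with $q_i\mid\Delta_f$ we bound each factor by $d\sqrt{\Delta_f}$ using \eqref{eq:09292100}; since the number of such $i$ is at most $\#\{j:p_j\mid\Delta_f\}=m-|S|$ and $d\sqrt{\Delta_f}\ge1$, the corresponding partial product is at most $(d\sqrt{\Delta_f})^{m-|S|}$. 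For those $i$ with $q_i\nmid\Delta_f$ we use the stabilization $\omega_f(q_i^{w_i})=\omega_f(q_i)$: if $\omega_f(q_i)=0$ then the whole left-hand side vanishes and there is nothing to prove, while if $\omega_f(q_i)\ge1$ then, because $s_i\ge1$,
\[
\omega_f(q_i^{w_i})=\omega_f(q_i)\le\omega_f(q_i)^{s_i}=\prod_{j:\,p_j=q_i}\omega_f(p_j^{v_j}),
\]
the last equality again by $\omega_f(q_i^{v_j})=\omega_f(q_i)$ for $v_j\ge1$. Multiplying over all $i$ with $q_i\nmid\Delta_f$ and noting that $S=\{j:p_j\nmid\Delta_f\}$, the partial product over the unramified primes is at most $\prod_{j\in S}\omega_f(p_j^{v_j})$, and combining the two partial estimates yields the asserted bound.

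I expect the only genuinely delicate point to be the stabilization $\omega_f(q^v)=\omega_f(q)$ for $q\nmid\Delta_f$: this is exactly what compensates for the failure, in general, of the identity $\omega_f(k_1\cdots k_m)=\omega_f(k_1)\cdots\omega_f(k_m)$ noted above, and it is also what forces the factor $(d\sqrt{\Delta_f})^{m-|S|}$ coming from the ramified primes. The remainder is routine bookkeeping -- grouping equal primes together and invoking the trivial inequalities $d\sqrt{\Delta_f}\ge1$ and $a\le a^{s}$ for a nonnegative integer $a$ and an integer $s\ge1$.
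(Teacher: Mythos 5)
Your proof is correct and follows essentially the same route as the paper's: factor by multiplicativity of $\omega_f$ over distinct primes, use Hensel's lemma to get the stabilization $\omega_f(q^v)=\omega_f(q)$ for $q\nmid\Delta_f$, bound the ramified factors by Huxley's estimate \eqref{eq:09292100}, and handle the $\omega_f(q)=0$ case separately. The paper organizes the bookkeeping slightly differently (first splitting the whole modulus into an unramified part $A$ and ramified part $B$, then collapsing repeated primes via auxiliary index sets $S'$, $S''$), whereas you group by the distinct primes $q_1,\dots,q_r$ from the outset and argue prime-by-prime; the two are equivalent, and your version also spells out why $q\nmid\Delta_f$ makes $f\bmod q$ separable, which the paper leaves implicit.
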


\begin{proof}
Put $p_1^{v_1} \cdots p_m^{v_m}=AB$ with $A=\prod_{j \in S} p_j^{v_j}$ and $B=\prod_{j \notin S} p_j^{v_j}$. 
Since the function $\omega_f(k)$ is multiplicative, we have 
\begin{align*}
\omega_f(p_1^{v_1} \cdots p_m^{v_m})
= \omega_f(A)\omega_f(B)
\end{align*}
by noting that $(A, B)=1$. 
Here, and throughout this paper, $(A,B)$ denotes the greatest common divisor of $A$ and $B$. 
Remark that the result is trivial if $\omega_f(A)=0$. 
Thus we suppose that $\omega_f(A)\neq0$ below. 
Let $S'$ be a minimal subset of $S$ such that $\{p_j \mid j \in S \}= \{p_j \mid j \in S' \}$, and rewrite the integer $A$ as $A=\prod_{j \in S'} p_j^{w_j}$.  
In this case, the primes $p_j$ for $j \in S'$ are distinct. 
Therefore we have
\begin{align*}
\omega_f(A)
= \prod_{j \in S'} \omega_f(p_j^{w_j})
\end{align*}
by the multiplicativity of $\omega_f(k)$. 
Furthermore, we see that $\omega_f(p_j^{w_j})=\omega_f(p_j)$ holds for any $j \in S'$ by Hensel's lemma. 
Thus $\omega_f(A)$ is evaluated as
\begin{align*}
\omega_f(A)
= \prod_{j \in S'} \omega_f(p_j)
\leq \prod_{j \in S} \omega_f(p_j)
\end{align*}
since $\omega_f(p_j)\geq1$ is valid for any $j \in S$ by the assumption that $\omega_f(A)\neq0$. 
Applying Hensel's lemma again, we have $\omega_f(p_j)=\omega_f(p_j^{v_j})$ for any $j \in S$. 
Hence we conclude that the inequality
\begin{align}\label{eq:10110000}
\omega_f(A)
\leq \prod_{j \in S} \omega_f(p_j^{v_j})
\end{align}
is satisfied. 
Let $S''$ be a maximal subset of $\{1,\ldots,m\}$ with $S \subset S''$ such that $\{p_j \mid j \notin S \}= \{p_j \mid j \notin S'' \}$, and rewrite the integer $B$ as $B=\prod_{j \notin S''} p_j^{w_j}$. 
In this case, we derive
\begin{align}\label{eq:10110001}
\omega_f(B)
= \prod_{j \notin S''} \omega_f(p_j^{w_j})
\leq (d \sqrt{\Delta_f})^{m-|S''|} 
\leq (d \sqrt{\Delta_f})^{m-|S|}
\end{align}
by using \eqref{eq:09292100}. 
Combining \eqref{eq:10110000} and \eqref{eq:10110001},  we obtain the desired result. 
\end{proof}

We are now ready to evaluate $V_m^+$, $W_m^+$, and $V_i^-$, $W_i^-$ for $i=1,\ldots,m$ defined in Proposition \ref{prop:3.2}. 
Firstly, we consider the upper bounds for $V_m^+$ and $V_i^-$. 

\begin{proposition}\label{prop:4.3}
Suppose that $f(t) \in \mathbb{Z}[t]$ is a product of distinct irreducible factors $f_1(t), \ldots, f_g(t)$ over $\mathbb{Z}[t]$ of degrees $d_1, \ldots, d_g \geq1$, respectively. 
Let $x,y,z \in \mathbb{R}$ with $x>z>T_0(f)$ and $y \geq2$. 
Suppose further that $f(t) \to\infty$ as $t \to\infty$, and that $f(z)>x$. 
Put $h=x-z$. 
Then there exists a constant $A(f)>1$ depending only on $f(t)$ such that, for any $m \in \mathbb{Z}_{\geq1}$ and any $i =1,\ldots,m$, we have
\begin{align*}
V_m^+
&\leq \frac{hg^m (\log{y}+O(1))^m}{2\log{f(z)} \left(\log{\frac{f(z)}{x}}\right)^{m-1}} 
+ \frac{A(f)^m (\log{x})^m \left(\frac{y}{\log{y}}\right)^m }{\log{f(z)} \left(\log{\frac{f(z)}{x}}\right)^{m-1}}, \\
V_i^-
&\leq \frac{A(f)^i (\log{x})^{i-1} \log{f(x)} \left(\frac{y}{\log{y}}\right)^i }{\log{f(z)} \left(\log{\frac{f(z)}{x}}\right)^{i-1}}, 
\end{align*}
where $V_m^+$ and $V_i^-$ are as in Proposition \ref{prop:3.2}, and the implied constant depends only on $f(t)$.
\end{proposition}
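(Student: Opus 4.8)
The plan is to estimate $V_m^+$ and each $V_i^-$ by first disposing of the innermost count over $n$ with the trivial divisor bound \eqref{eq:09300001}, and then reducing the remaining multiple sums over $k_1,\dots$ to the one-variable sums controlled by Lemma \ref{lem:4.1}, using Lemma \ref{lem:4.2} to cope with the failure of complete multiplicativity of $\omega_f$.

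For $V_m^+$, write $\kappa=k_1\cdots k_m\leq h$; by \eqref{eq:09300001} the inner sum over $n$ is at most $\omega_f(\kappa)(h/\kappa+1)=h\omega_f(\kappa)/\kappa+\omega_f(\kappa)$, so $V_m^+$ splits into a main part carrying the weight $h\omega_f(\kappa)/\kappa$ and an error part carrying $\omega_f(\kappa)$. In both parts one may drop the coupling condition $k_1\cdots k_m\leq h$ (all summands are nonnegative), relaxing it in the error part only to $k_j\leq h$ for every $j$, which is needed because $\sum_{P^+(k)\leq y}\Lambda(k)\omega_f(k)$ diverges. Since $\Lambda$ is supported on prime powers, each $k_j=p_j^{v_j}$, and Lemma \ref{lem:4.2} gives $\omega_f(k_1\cdots k_m)\leq(d\sqrt{\Delta_f})^{m-|S|}\prod_{j\in S}\omega_f(k_j)$ with $S=\{j:p_j\nmid\Delta_f\}$. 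Grouping the tuples by the value $S=T$ turns each group into a product of one-variable sums: for $j\in T$ one gets $\sum_k\Lambda(k)\omega_f(k)/k$ (or its $\sqrt y<P^+(k)\leq y$ analogue when $j=1$), bounded by $g\log y+O(1)$ (resp.\ $\tfrac g2\log y+O(1)$) via \eqref{eq:09292105} and \eqref{eq:09292108}, while for $j\notin T$ one gets $d\sqrt{\Delta_f}\sum_{p\mid\Delta_f}\sum_v(\log p)/p^v=O(1)$, all implied constants depending only on $f$. The device that preserves the coefficient $\tfrac12$ is that once $y$ exceeds a constant $y_0(f)$ for which every prime dividing $\Delta_f$ is below $\sqrt y$, the condition $P^+(k_1)>\sqrt y$ forces $1\in T$; summing the products over the remaining subsets $T'\subseteq\{2,\dots,m\}$ then factors as $\prod_{j=2}^m(A+B)$ with $A\leq g\log y+O(1)$ and $B=O(1)$, giving $(\tfrac g2\log y+O(1))(g\log y+O(1))^{m-1}\leq\tfrac12 g^m(\log y+c_f)^m$ for a constant $c_f$ depending only on $f$. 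For $2\leq y<y_0(f)$ one instead uses $\omega_f(k_1\cdots k_m)\leq(d\sqrt{\Delta_f})^m$ and $\sum_{P^+(k)\leq y}\Lambda(k)/k\leq\log y+O(1)$, and absorbs the resulting constant (bounded in $y$) into $\tfrac12 g^m(\log y+c_f)^m$ by enlarging $c_f$, which is possible since $2^{1/m}\leq2$. The error part is handled the same way but with \eqref{eq:09292107} in place of \eqref{eq:09292105}: the $j\in T$ factors become $\sum_{k\leq h}\Lambda(k)\omega_f(k)=O(y\log h/\log y)$ and the $j\notin T$ factors become $O(\log h)$, so with $\log h\leq\log x$ and $y/\log y\geq e$ the whole sum is at most $(A(f)\log x\cdot y/\log y)^m$, which after dividing by the logarithmic denominators is the second term in the stated bound.

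The bound for $V_i^-$ is a direct variant of this error analysis, since $V_i^-$ already carries $\omega_f$ rather than $\omega_f(\kappa)/\kappa$: relax its constraints to $k_j\leq h$ for $j\leq i-1$ and $k_i\leq f(x)$ (dropping $h/(k_1\cdots k_{i-1})<k_i$ entirely), group by $T=\{j:p_j\nmid\Delta_f\}$, apply Lemma \ref{lem:4.2}, and use \eqref{eq:09292107} to bound each of the $i-1$ short factors by $O(y\log h/\log y)$ and the one long factor by $O(y\log f(x)/\log y)$, the $p_j\mid\Delta_f$ factors being comparably small; summing the at most $2^i$ subset contributions gives the asserted bound with an appropriate $A(f)>1$. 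The degenerate case $h<2$ is handled separately but trivially, since then $k_1\cdots k_m\leq h$ (resp.\ $k_1\cdots k_{i-1}\leq h$) forces those variables to be $1$, where $\Lambda$ vanishes, so $V_m^+=0$ (resp.\ $V_i^-=0$ for $i\geq2$), while $V_1^-$ is still controlled directly by \eqref{eq:09292107}.

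The only real difficulty I anticipate is organizational: carrying out the grouping-by-$T$ bookkeeping so that the $2^m$ subsets, the factors $(d\sqrt{\Delta_f})^{m-|S|}$, and the binomial cross-terms are all absorbed into a single constant $A(f)^m$ and into the fixed additive constant inside $(\log y+O(1))^m$, with no residual dependence on $m$, and, crucially, so that the coefficient $\tfrac12$ rather than $1$ survives in front of $hg^m(\log y+O(1))^m$. What makes the latter work is exactly the observation that the constraint $P^+(k_1)>\sqrt y$ forces $k_1$ coprime to $\Delta_f$ for all $y$ past a point depending only on $f$; everything else is routine manipulation of the one-variable estimates in Lemma \ref{lem:4.1}.
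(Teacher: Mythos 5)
Your proposal is correct and rests on the same machinery as the paper's proof: the trivial divisor bound \eqref{eq:09300001} for the inner count over $n$, the reduction of the multi-variable sums via Lemma \ref{lem:4.2} after grouping tuples $(k_1,\ldots,k_m)$ by the set $T$ of indices with $(k_j,\Delta_f)=1$, and the one-variable estimates of Lemma \ref{lem:4.1}. The one genuine difference is the device used to preserve the coefficient $\tfrac12$ on the main term. The paper treats all $y\geq2$ at once, splits each subset-sum into the cases $1\in S$ and $1\notin S$, and recombines via the Pascal-type identity $\tfrac12\binom{m-1}{n-1}+\binom{m-1}{n}=\tfrac12\binom{m}{n}+\tfrac12\binom{m-1}{n}$, showing the $1\notin S$ contribution is absorbed into the additive $O(1)$. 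You instead observe that once $y$ exceeds a constant $y_0(f)$ depending only on the primes dividing $\Delta_f$, the constraint $\sqrt{y}<P^+(k_1)$ (with $k_1$ a prime power) forces $(k_1,\Delta_f)=1$, so $1\in T$ automatically and the $\tfrac12$ comes directly from \eqref{eq:09292108}; the bounded range $2\leq y<y_0(f)$ is then handled crudely with $\omega_f(\cdot)\leq(d\sqrt{\Delta_f})^m$ and Mertens, and absorbed into the constant $c_f$ uniformly in $m$ because $2^{1/m}\leq2$. Your version avoids the binomial bookkeeping at the cost of a case split on $y$; the paper's is uniform in $y$ but requires the combinatorial rearrangement. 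The treatment of the error term and of $V_i^-$ via \eqref{eq:09292107}, and the degenerate case $h<2$, all agree with the paper in substance.
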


\begin{proof}
Using \eqref{eq:09300001} with $\kappa=k_1 \cdots k_m$, we have the inequality
\begin{align*}
\sum_{\substack{z<n \leq x \\ k_1 \cdots k_m \mid f(n) \\ P^+(f(n)) \leq y}} 1
\leq \omega_f(k_1 \cdots k_m) \left(\frac{h}{k_1 \cdots k_m}+1\right). 
\end{align*}
Then $V_m^+$ is evaluated as
\begin{align}\label{eq:09301435}
V_m^+
&\leq \frac{h}{\log{f(z)} \left(\log{\frac{f(z)}{x}}\right)^{m-1}} 
\sum_{\substack{k_1,\ldots,k_m \geq 1 \\ k_1 \cdots k_m \leq h \\ \sqrt{y} < P^+(k_1) \leq y \\ P^+(k_j) \leq y ~(j \geq 2)}} 
\frac{\Lambda(k_1) \cdots \Lambda(k_m)}{k_1 \cdots k_m} \omega_f(k_1 \cdots k_m) \\
&\quad 
+ \frac{1}{\log{f(z)} \left(\log{\frac{f(z)}{x}}\right)^{m-1}} 
\sum_{\substack{k_1,\ldots,k_m \geq 1 \\ k_1 \cdots k_m \leq h \\ P^+(k_j) \leq y ~(\forall j)}} 
\Lambda(k_1) \cdots \Lambda(k_m) \omega_f(k_1 \cdots k_m). \nonumber
\end{align}
Let $S \subset \{1,\ldots,m\}$. 
Then we divide the first sum of the right-hand side of \eqref{eq:09301435} into those over $k_1, \ldots, k_m \geq1$ such that $(k_j,\Delta_f)=1$ if $j \in S$ and $(k_j,\Delta_f)>1$ otherwise, where $\Delta_f$ is the absolute value of the discriminant of $f(t)$. 
We have
\begin{align}\label{eq:09301334}
&\sum_{\substack{k_1,\ldots,k_m \geq 1 \\ k_1 \cdots k_m \leq h \\ \sqrt{y} < P^+(k_1) \leq y \\ P^+(k_j) \leq y ~(j \geq 2)}} 
= \sum_{S \subset \{1, \ldots, m\}}
\sum_{\substack{k_1,\ldots,k_m \geq 1 \\ k_1 \cdots k_m \leq h \\ \sqrt{y} < P^+(k_1) \leq y \\ P^+(k_j) \leq y ~(j \geq 2) \\ (k_j,\Delta_f)=1 \Leftrightarrow j \in S}} \\
&= \sum_{n=1}^{m} \sum_{\substack{S \subset \{1, \ldots, m\} \\ |S|=n \\ 1 \in S}}
\sum_{\substack{k_1,\ldots,k_m \geq 1 \\ k_1 \cdots k_m \leq h \\ \sqrt{y} < P^+(k_1) \leq y  \\ P^+(k_j) \leq y ~(j \geq 2) \\ (k_j,\Delta_f)=1 \Leftrightarrow j \in S}} 
+ \sum_{n=0}^{m-1} \sum_{\substack{S \subset \{1, \ldots, m\} \\ |S|=n \\ 1 \notin S}}
\sum_{\substack{k_1,\ldots,k_m \geq 1 \\ k_1 \cdots k_m \leq h \\ \sqrt{y} < P^+(k_1) \leq y  \\ P^+(k_j) \leq y ~(j \geq 2) \\ (k_j,\Delta_f)=1 \Leftrightarrow j \in S}} \nonumber
\end{align}
by dividing further the cases into $1 \in S$ or not.
Suppose that $|S|=n$ and $1 \in S$ hold. 
Since the von Mangoldt function $\Lambda(n)$ is supported on prime powers, we deduce from Lemma \ref{lem:4.2} that
\begin{align*}
&\sum_{\substack{k_1,\ldots,k_m \geq 1 \\ k_1 \cdots k_m \leq h \\ \sqrt{y} < P^+(k_1) \leq y  \\ P^+(k_j) \leq y ~(j \geq 2) \\ (k_j,\Delta_f)=1 \Leftrightarrow j \in S}} 
\frac{\Lambda(k_1) \cdots \Lambda(k_m)}{k_1 \cdots k_m} \omega_f(k_1 \cdots k_m) \\
&\leq \sum_{\substack{k_1,\ldots,k_m \geq 1 \\ k_1 \cdots k_m \leq h \\ \sqrt{y} < P^+(k_1) \leq y  \\ P^+(k_j) \leq y ~(j \geq 2) \\ (k_j,\Delta_f)=1 \Leftrightarrow j \in S}} 
\frac{\Lambda(k_1) \cdots \Lambda(k_m)}{k_1 \cdots k_m} (d \sqrt{\Delta_f})^{m-n} \prod_{j \in S} \omega_f(k_j) \\
&\leq \sum_{\substack{1 \leq k \leq x \\ \sqrt{y}< P^+(k) \leq y}} \frac{\Lambda(k)}{k} \omega_f(k)
\left(\sum_{\substack{1 \leq k \leq x \\ P^+(k) \leq y}} \frac{\Lambda(k)}{k} \omega_f(k)\right)^{n-1} 
\left(d \sqrt{\Delta_f} \sum_{\substack{1 \leq k \leq x \\ P^+(k) \leq y \\ (k,\Delta_f)>1}} \frac{\Lambda(k)}{k} \right)^{m-n} 
\end{align*}
by recalling that $h=x-z<x$. 
Applying \eqref{eq:09292105} and \eqref{eq:09292108} of Lemma \ref{lem:4.1}, we have
\begin{align*}
&\sum_{\substack{1 \leq k \leq x \\ \sqrt{y}< P^+(k) \leq y}} \frac{\Lambda(k)}{k} \omega_f(k)
\left(\sum_{\substack{1 \leq k \leq x \\ P^+(k) \leq y}} \frac{\Lambda(k)}{k} \omega_f(k)\right)^{n-1} \\
&\leq \left(\frac{g}{2} \log{y}+O(1)\right)
\left(g\log{y}+O(1)\right)^{n-1}
= \frac{1}{2} \left(g \log{y}+O(1)\right)^n, 
\end{align*}
where the implied constants depend only on $f(t)$. 
Then we consider the sum over $k$ with $(k,\Delta_f)>1$. 
We may suppose that $k$ varies over prime powers since $\Lambda(k)=0$ otherwise. 
Furthermore, if $k=p^v$ satisfies $(k,\Delta_f)>1$, then $p \mid \Delta_f$ must be satisfied. 
Therefore we obtain
\begin{align*}
\sum_{\substack{1 \leq k \leq x \\ P^+(k) \leq y \\ (k,\Delta_f)>1}} \frac{\Lambda(k)}{k}
= \sum_{\substack{p \leq y \\ p \mid \Delta_f}} \sum_{\substack{v \geq1 \\ p^v \leq x}} \frac{\log{p}}{p^v}
\leq \sum_{p \mid \Delta_f} \frac{\log{p}}{p-1}
\leq \log{\Delta_f}. 
\end{align*}
Hence we arrive at the estimate
\begin{align}\label{eq:09301335}
&\sum_{\substack{k_1,\ldots,k_m \geq 1 \\ k_1 \cdots k_m \leq h \\ \sqrt{y} < P^+(k_1) \leq y  \\ P^+(k_j) \leq y ~(j \geq 2) \\ (k_j,\Delta_f)=1 \Leftrightarrow j \in S}} 
\frac{\Lambda(k_1) \cdots \Lambda(k_m)}{k_1 \cdots k_m} \omega_f(k_1 \cdots k_m) \\
&\leq \frac{1}{2} \left(g \log{y}+O(1)\right)^n \left(d \sqrt{\Delta_f} \log{\Delta_f}\right)^{m-n} \nonumber
\end{align}
if $|S|=n$ and $1 \in S$. 
On the other hand, if we suppose that $|S|=n$ and $1 \notin S$, then we have
\begin{align}\label{eq:09301336}
&\sum_{\substack{k_1,\ldots,k_m \geq 1 \\ k_1 \cdots k_m \leq h \\ \sqrt{y} < P^+(k_1) \leq y  \\ P^+(k_j) \leq y ~(j \geq 2) \\ (k_j,\Delta_f)=1 \Leftrightarrow j \in S}} 
\frac{\Lambda(k_1) \cdots \Lambda(k_m)}{k_1 \cdots k_m} \omega_f(k_1 \cdots k_m) \\
&\leq \left(\sum_{\substack{1 \leq k \leq x \\ P^+(k) \leq y}} \frac{\Lambda(k)}{k} \omega_f(k)\right)^{n} 
\left(d \sqrt{\Delta_f} \sum_{\substack{1 \leq k \leq x \\ P^+(k) \leq y \\ (k,\Delta_f)>1}} \frac{\Lambda(k)}{k} \right)^{m-n} \nonumber\\
&\leq \left(g \log{y}+O(1)\right)^n \left(d \sqrt{\Delta_f} \log{\Delta_f}\right)^{m-n} \nonumber
\end{align}
in a similar way. 
Here, the number of the subsets $S \subset \{1,\ldots,m\}$ with $|S|=n$ and $1 \in S$ equals to $\binom{m-1}{n-1}$, and the number of $S \subset \{1,\ldots,m\}$ with $|S|=n$ and $1 \notin S$ equals to $\binom{m-1}{n}$. 
Therefore, by \eqref{eq:09301334}, \eqref{eq:09301335}, and \eqref{eq:09301336}, we derive 
\begin{align}\label{eq:09301526}
&\sum_{\substack{k_1,\ldots,k_m \geq 1 \\ k_1 \cdots k_m \leq h \\ \sqrt{y} < P^+(k_1) \leq y  \\ P^+(k_j) \leq y ~(j \geq 2)}} 
\frac{\Lambda(k_1) \cdots \Lambda(k_m)}{k_1 \cdots k_m} \omega_f(k_1 \cdots k_m) \\
&\leq \sum_{n=1}^{m} \binom{m-1}{n-1} \frac{1}{2} \left(g \log{y}+O(1)\right)^{n} \left(d \sqrt{\Delta_f} \log{\Delta_f}\right)^{m-n} \nonumber \\
&\quad
+ \sum_{n=0}^{m-1} \binom{m-1}{n} \left(g \log{y}+O(1)\right)^{n} \left(d \sqrt{\Delta_f} \log{\Delta_f}\right)^{m-n} \nonumber \\
&= \frac{1}{2}  \sum_{n=0}^{m} \binom{m}{n} \left(g \log{y}+O(1)\right)^{n} \left(d \sqrt{\Delta_f} \log{\Delta_f}\right)^{m-n} \nonumber \\
&\quad
+ \frac{1}{2} \sum_{n=0}^{m-1} \binom{m-1}{n} \left(g \log{y}+O(1)\right)^{n} \left(d \sqrt{\Delta_f} \log{\Delta_f}\right)^{m-n} \nonumber \\
&= \frac{1}{2} (g \log{y}+O(1))^m 
+ \frac{1}{2} d \sqrt{\Delta_f} \log{\Delta_f} (g \log{y}+O(1))^{m-1} \nonumber \\
&= \frac{1}{2} g^m(\log{y}+O(1))^m , \nonumber
\end{align}
where the implied constants depend only on $f(t)$.
Next, we consider the second sum of the right-hand side of \eqref{eq:09301435}. 
We have
\begin{align*}
\sum_{\substack{1 \leq k \leq x \\ P^+(k) \leq y \\ (k,\Delta_f)>1}} \Lambda(k)
= \sum_{\substack{p \leq y \\ p \mid \Delta_f}} \sum_{\substack{v \geq1 \\ p^v \leq x}} \log{p}
\leq \sum_{p \mid \Delta_f} \left[\frac{\log{x}}{\log{p}}\right] \log{p}
\ll \log{x},
\end{align*}
where the implied constant depends only on $\Delta_f$.
Using this and \eqref{eq:09292107} of Lemma \ref{lem:4.1} in place of \eqref{eq:09292105} and \eqref{eq:09292108}, we also obtain
\begin{align}\label{eq:09301344}
&\sum_{\substack{k_1,\ldots,k_m \geq 1 \\ k_1 \cdots k_m \leq h \\ P^+(k_j) \leq y ~(\forall j)}} 
\Lambda(k_1) \cdots \Lambda(k_m) \omega_f(k_1 \cdots k_m) \\
&= \sum_{n=0}^{m} \sum_{\substack{S \subset \{1, \ldots, m\} \\ |S|=n}}
\sum_{\substack{k_1,\ldots,k_m \geq 1 \\ k_1 \cdots k_m \leq h \\ P^+(k_j) \leq y ~(\forall j) \\ (k_j,\Delta_f)=1 \Leftrightarrow j \in S}} 
\Lambda(k_1) \cdots \Lambda(k_m) \omega_f(k_1 \cdots k_m) \nonumber \\
&\leq \sum_{n=0}^{m} \binom{m}{n} \left(C_1(f)y\frac{\log{x}}{\log{y}}\right)^n \left(d \sqrt{\Delta_f} C_2(f) \log{x}\right)^{m-n} \nonumber\\
&= \left(C_1(f) y\frac{\log{x}}{\log{y}} 
+ d \sqrt{\Delta_f} C_2(f) \log{x}\right)^m \nonumber  \\
&\leq A(f)^m (\log{x})^m \left(\frac{y}{\log{y}}\right)^m, \nonumber 
\end{align}
where $C_i(f)$ and $A(f)$ are positive constants depending at most on $f(t)$.
As a result, the desired estimate for $V_m^+$ follows from \eqref{eq:09301435}. 
As for $V_i^-$, we have
\begin{align*}
&\sum_{\substack{k_1,\ldots,k_i \geq 1 \\ k_1 \cdots k_{i-1} \leq h,~ \frac{h}{k_1 \cdots k_{i-1}}<k_i \leq f(x) \\ P^+(k_j) \leq y ~(\forall j)}} 
\Lambda(k_1) \cdots \Lambda(k_i) \omega_f(k_1 \cdots k_i) \\
&\leq A(f)^i (\log{x})^{i-1} \log{f(x)} \left(\frac{y}{\log{y}}\right)^{i} 
\end{align*}
for every $i=1,\ldots,m$ along the same line as \eqref{eq:09301344}. 
This yields the desired estimate for $V_i^-$. 
From the above, the proof of the results are completed.
\end{proof}

Secondly, we consider the upper bounds for $W_m^+$ and $W_i^-$. 

\begin{proposition}\label{prop:4.4}
Suppose that $f(t) \in \mathbb{Z}[t]$ satisfies the same conditions as in Proposition \ref{prop:4.3}. 
Let $x,y,z \in \mathbb{R}$ with $x>z>T_0(f)$ and $y \geq2$. 
Put $h=x-z$. 
Then there exists a constant $A(f)>1$ depending only on $f(t)$ such that, for any $m \in \mathbb{Z}_{\geq1}$ and any $i =1,\ldots,m$, we have 
\begin{align*}
W_m^+
&\leq \frac{hg^m (2g+1) (\log{y}+O(1))^{m+1}}{8(\log{f(z)})^2 \left(\log{\frac{f(z)}{x}}\right)^{m-1}} 
+ \frac{A(f)^{m+1} (\log{x})^{m+1} \left(\frac{y}{\log{y}}\right)^{m+1} }{(\log{f(z)})^2 \left(\log{\frac{f(z)}{x}}\right)^{m-1}}, \\
W_i^-
&\leq \frac{A(f)^{i+1} (\log{x})^{i-1} (\log{f(x)})^2 \left(\frac{y}{\log{y}}\right)^{i+1} }{(\log{f(z)})^2 \left(\log{\frac{f(z)}{x}}\right)^{i-1}} 
\end{align*}
where $W_m^+$ and $W_i^-$ are as in Proposition \ref{prop:3.2}, and the implied constant depends only on $f(t)$. 
\end{proposition}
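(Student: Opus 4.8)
The plan is to follow the proof of Proposition~\ref{prop:4.3} almost verbatim; the one genuinely new ingredient is the least common multiple $[k_1,k_2]$, which is dealt with by separating the case that $k_1$ and $k_2$ are powers of the same prime. I would begin with $W_m^+$. Applying \eqref{eq:09300001} with $\kappa=[k_1,k_2]k_3\cdots k_{m+1}$ bounds the inner sum over $n$ by $\omega_f(\kappa)\bigl(h/\kappa+1\bigr)$, and hence splits $W_m^+$ into a \emph{main term} carrying the weight $h/([k_1,k_2]k_3\cdots k_{m+1})$ and an \emph{error term} carrying the weight $1$. For the error term I would argue exactly as in the passage leading to \eqref{eq:09301344}: since $\Lambda$ is supported on prime powers, $[k_1,k_2]k_3\cdots k_{m+1}$ is again a product of prime powers, so Lemma~\ref{lem:4.2} applies; splitting the $m+1$ variables according to whether $(k_j,\Delta_f)=1$, inserting \eqref{eq:09292107}, and using $\sum_{p\mid\Delta_f}\sum_{v\geq1,\,p^v\leq x}\log p\ll\log x$ gives a bound of shape $A(f)^{m+1}(\log x)^{m+1}\bigl(y/\log y\bigr)^{m+1}$, which after division by $(\log f(z))^2\bigl(\log\tfrac{f(z)}{x}\bigr)^{m-1}$ is the second term claimed for $W_m^+$.

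The main term is where the coefficient $2g+1$ is produced. Writing $k_1=p_1^{a_1}$ and $k_2=p_2^{a_2}$, I would split it according to whether $p_1\neq p_2$ or $p_1=p_2$. When $p_1\neq p_2$ one has $[k_1,k_2]=k_1k_2$, and the sum is estimated along the lines of \eqref{eq:09301526} (with the $\Delta_f$-coprimality bookkeeping inserted exactly as there), the only change being that $k_1$ and $k_2$ now range over integers with $P^+(k)\leq\sqrt y$, so that by \eqref{eq:09292105} each of them contributes $g\log\sqrt y+O(1)=\tfrac g2\log y+O(1)$ while the remaining $m-1$ variables contribute $g\log y+O(1)$ apiece; this case therefore totals $\tfrac14 g^{m+1}(\log y+O(1))^{m+1}$. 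When $p_1=p_2=:p$ one has $[k_1,k_2]=p^{\max(a_1,a_2)}$, and since the number of pairs $(a_1,a_2)$ with $\max(a_1,a_2)=v$ is $2v-1$, the $(k_1,k_2)$-part of the sum is governed by $\sum_{p\leq\sqrt y}\sum_{v\geq1,\,p^v\leq x}\frac{(2v-1)(\log p)^2}{p^v}\omega_f(p^v)$, which is precisely the left-hand side of \eqref{eq:09292106} with $y$ replaced by $\sqrt y$, hence is at most $\tfrac g2(\log\sqrt y)^2+O(\log y)=\tfrac g8(\log y)^2+O(\log y)$; multiplying by the contribution $(g\log y+O(1))^{m-1}$ of $k_3,\dots,k_{m+1}$ gives $\tfrac18 g^m(\log y+O(1))^{m+1}$ for this case. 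Adding the two cases yields $\bigl(\tfrac g4+\tfrac18\bigr)g^m(\log y+O(1))^{m+1}=\tfrac{2g+1}{8}\,g^m(\log y+O(1))^{m+1}$, and restoring the weight $h$ and the denominator gives the first term claimed for $W_m^+$.

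For $W_i^-$ there is no $h/\kappa$ weight, so I would simply bound $\omega_f([k_1,k_2]k_3\cdots k_{i+1})$ by Lemma~\ref{lem:4.2} and apply \eqref{eq:09292107} as in \eqref{eq:09301344}: the $i$ variables $k_1,\dots,k_i$ obey $[k_1,k_2]k_3\cdots k_i\leq h<x$ and contribute $(\log x)^i$, while $k_{i+1}\leq f(x)$ contributes $\log f(x)$, so together with the factor $(y/\log y)^{i+1}$ coming from the $i+1$ variables and the inequality $\log x\cdot\log f(x)\leq(\log f(x))^2$ valid for large $x$, one obtains $A(f)^{i+1}(\log x)^{i-1}(\log f(x))^2\bigl(y/\log y\bigr)^{i+1}$; division by $(\log f(z))^2\bigl(\log\tfrac{f(z)}{x}\bigr)^{i-1}$ gives the bound for $W_i^-$, the case $i=1$ (in which both $k_1,k_2$ already range up to $f(x)$) being the boundary instance. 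The hard part will be the diagonal term $p_1=p_2$ in the main term of $W_m^+$: it has no counterpart in the analysis of $V_m^+$ in Proposition~\ref{prop:4.3}, it alone is responsible for upgrading $2g$ to $2g+1$ in the final bound, and it forces one to recognise the sum $\sum(2v-1)(\log p)^2p^{-v}\omega_f(p^v)$ as an instance of \eqref{eq:09292106} and to keep careful track of the halving of $\log y$ produced by the constraint $P^+\leq\sqrt y$; the $\Delta_f$-coprimality bookkeeping, though lengthy, only affects the lower-order $O$-terms exactly as in Proposition~\ref{prop:4.3}.
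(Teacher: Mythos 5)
Your proposal is correct and follows essentially the same route as the paper: it applies \eqref{eq:09300001}, splits into a main term and an error term, splits the main term on $(k_1,k_2)=1$ versus $(k_1,k_2)>1$, uses \eqref{eq:09292105} and \eqref{eq:09292108} (with $y$ replaced by $\sqrt{y}$) for the coprime case and \eqref{eq:09292106} for the diagonal case, and handles the error term and $W_i^-$ via Lemma~\ref{lem:4.2} and \eqref{eq:09292107} together with $\log x<\log f(x)$. The only cosmetic difference is that you count the pairs $(a_1,a_2)$ with $\max(a_1,a_2)=v$ directly to get the factor $2v-1$, whereas the paper arrives at the same quantity by rewriting the inner sum as $(2\log\kappa-\Lambda(\kappa))\Lambda(\kappa)/\kappa$ via the identity $\sum_{d\mid\kappa}\Lambda(d)=\log\kappa$; these are the same calculation.
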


\begin{proof}
Using \eqref{eq:09300001} with $\kappa=[k_1,k_2]k_3 \cdots k_m$, we obtain
\begin{align*}
W_m^+
&\leq \frac{h}{(\log{f(z)})^2 \left(\log{\frac{f(z)}{x}}\right)^{m-1}} \\
&\quad \times
\sum_{\substack{k_1,\ldots,k_{m+1} \geq 1 \\ [k_1,k_2]k_3 \cdots k_{m+1} \leq h \\ P^+(k_j) \leq \sqrt{y} ~(j=1,2) \\ P^+(k_j) \leq y ~(j \geq 3)}} 
\frac{\Lambda(k_1) \cdots \Lambda(k_{m+1})}{[k_1,k_2] k_3 \cdots k_{m+1}} \omega_f([k_1,k_2] k_3 \cdots k_{m+1}) \nonumber\\
&\quad 
+ \frac{1}{(\log{f(z)})^2 \left(\log{\frac{f(z)}{x}}\right)^{m-1}} \nonumber\\
&\qquad \times
\sum_{\substack{k_1,\ldots,k_{m+1} \geq 1 \\ [k_1,k_2]k_3 \cdots k_{m+1} \leq h \\ P^+(k_j) \leq y ~(\forall j)}} 
\Lambda(k_1) \cdots \Lambda(k_{m+1}) \omega_f([k_1,k_2] k_3 \cdots k_{m+1}) \nonumber
\end{align*}
by the definition of $W_m^+$. 
We have
\begin{align*}
\sum_{\substack{k_1,\ldots,k_{m+1} \geq 1 \\ [k_1,k_2] k_3 \cdots k_{m+1} \leq h \\ P^+(k_j) \leq \sqrt{y} ~(j=1,2) \\ P^+(k_j) \leq y ~(j \geq 3)}} 
= \sum_{\substack{k_1,\ldots,k_{m+1} \geq 1 \\ [k_1,k_2] k_3 \cdots k_{m+1} \leq h \\ P^+(k_j) \leq \sqrt{y} ~(j=1,2) \\ P^+(k_j) \leq y ~(j \geq 3) \\ (k_1,k_2)=1}} 
+ \sum_{\substack{k_1,\ldots,k_{m+1} \geq 1 \\ [k_1,k_2] k_3 \cdots k_{m+1} \leq h \\ P^+(k_j) \leq \sqrt{y} ~(j=1,2) \\ P^+(k_j) \leq y ~(j \geq 3) \\ (k_1,k_2)>1}}.  
\end{align*}
Then we divide the sums into those over $k_1, \ldots, k_{m+1} \geq1$ such that $(k_j,\Delta_f)=1$ if $j \in S$ and $(k_j,\Delta_f)>1$ otherwise, where $S$ is a subset of $\{1,\ldots,m+1\}$. 
Recall that $[k_1,k_2] k_3 \cdots k_{m+1}=k_1k_2k_3 \cdots k_{m+1}$ holds if $(k_1,k_2)=1$. 
Therefore we obtain
\begin{align}\label{eq:09301657}
&\sum_{\substack{k_1,\ldots,k_{m+1} \geq 1 \\ [k_1,k_2] k_3 \cdots k_{m+1} \leq h \\ P^+(k_j) \leq \sqrt{y} ~(j=1,2) \\ P^+(k_j) \leq y ~(j \geq 3) \\ (k_1,k_2)=1}} 
\frac{\Lambda(k_1) \cdots \Lambda(k_{m+1})}{[k_1,k_2] k_3 \cdots k_{m+1}} \omega_f([k_1,k_2] k_3 \cdots k_{m+1}) \\
&\leq \sum_{n=2}^{m} \binom{m-2}{n-2} \frac{1}{4} \left(g \log{y}+O(1)\right)^{n} \left(d \sqrt{\Delta_f} \log{\Delta_f}\right)^{m-n} \nonumber \\
&\quad
+ 2\sum_{n=1}^{m-1} \binom{m-2}{n-1} \frac{1}{2} \left(g \log{y}+O(1)\right)^{n} \left(d \sqrt{\Delta_f} \log{\Delta_f}\right)^{m-n} \nonumber\\
&\quad
+ \sum_{n=0}^{m-2} \binom{m-2}{n} \left(g \log{y}+O(1)\right)^{n} \left(d \sqrt{\Delta_f} \log{\Delta_f}\right)^{m-n} \nonumber\\
&= \frac{1}{4} g^{m+1} (\log{y}+O(1))^{m+1} \nonumber
\end{align}
by an argument similar to \eqref{eq:09301526}, where we divide the cases into the followings: 
\begin{itemize}
\item[$(\mathrm{a})$]
$1,2 \in S$;
\item[$(\mathrm{b})$]
$1 \in S$ and $2 \notin S$, or $1 \notin S$ and $2 \in S$; 
\item[$(\mathrm{c})$]
$1,2 \notin S$. 
\end{itemize}
On the other hand, the sum over $k_1, \ldots,k_{m+1}$ such that $(k_1,k_2)>1$ is estimated as follows. 
We have  
\begin{align*}
&\sum_{\substack{k_1,\ldots,k_{m+1} \geq 1 \\ [k_1,k_2]k_3 \cdots k_{m+1} \leq h \\ P^+(k_j) \leq \sqrt{y} ~(j=1,2) \\ P^+(k_j) \leq y ~(j \geq 3) \\ (k_1,k_2)>1}} 
\frac{\Lambda(k_1) \cdots \Lambda(k_{m+1})}{[k_1,k_2]k_3 \cdots k_{m+1}} \omega_f([k_1,k_2]k_3 \cdots k_{m+1}) \\
&= \sum_{\substack{\kappa,k_3,\ldots,k_{m+1} \geq 1 \\ \kappa k_3 \cdots k_{m+1} \leq h \\ P^+(\kappa) \leq \sqrt{y} \\ P^+(k_j) \leq y ~(j \geq 3) }}
\sum_{\substack{k_1,k_2 \geq1 \\ [k_1,k_2]=\kappa \\ (k_1,k_2)>1}} \frac{\Lambda(k_1)\Lambda(k_2)}{[k_1,k_2]}
\frac{\Lambda(k_3) \cdots \Lambda(k_{m+1})}{k_3 \cdots k_{m+1}} \omega_f(\kappa k_3 \cdots k_{m+1}). 
\end{align*}
Furthermore, since the von Mangoldt function $\Lambda(n)$ is supported on prime powers, the inner sum is calculated as
\begin{align*}
\sum_{\substack{k_1,k_2 \geq1 \\ [k_1,k_2]=\kappa \\ (k_1,k_2)>1}} \frac{\Lambda(k_1)\Lambda(k_2)}{[k_1,k_2]}
&= \frac{\Lambda(\kappa)}{\kappa} \sum_{k_1 \mid \kappa} \Lambda(k_1)
+ \frac{\Lambda(\kappa)}{\kappa} \sum_{k_2 \mid \kappa} \Lambda(k_2)
- \frac{\Lambda(\kappa)^2}{\kappa} \\
&= \frac{(2\log{\kappa}-\Lambda(\kappa)) \Lambda(\kappa)}{\kappa}
\end{align*}
by recalling the formula $\sum_{d \mid \kappa} \Lambda(d)=\log{\kappa}$. 
Hence, applying \eqref{eq:09292105} and \eqref{eq:09292106}, we find that the estimate
\begin{align}\label{eq:10012056}
&\sum_{\substack{k_1,\ldots,k_{m+1} \geq 1 \\ [k_1,k_2]k_3 \cdots k_{m+1} \leq h \\ P^+(k_j) \leq \sqrt{y} ~(j=1,2) \\ P^+(k_j) \leq y ~(j \geq 3) \\ (k_1,k_2)>1}} 
\frac{\Lambda(k_1) \cdots \Lambda(k_{m+1})}{[k_1,k_2]k_3 \cdots k_{m+1}} \omega_f([k_1,k_2]k_3 \cdots k_{m+1}) \\
&\leq \frac{1}{8} g^m (\log{y}+O(1))^{m+1} \nonumber 
\end{align}
holds with the same argument as in \eqref{eq:09301657}. 
By \eqref{eq:09301657} and \eqref{eq:10012056}, we derive
\begin{align*}
&\sum_{\substack{k_1,\ldots,k_{m+1} \geq 1 \\ [k_1,k_2]k_3 \cdots k_{m+1} \leq h \\ P^+(k_j) \leq \sqrt{y} ~(j=1,2) \\ P^+(k_j) \leq y ~(j \geq 3)}} 
\frac{\Lambda(k_1) \cdots \Lambda(k_{m+1})}{[k_1,k_2]k_3 \cdots k_{m+1}} \omega_f([k_1,k_2]k_3 \cdots k_{m+1}) \\
&\leq \frac{1}{8} g^m(2g+1) (\log{y}+O(1))^{m+1}. 
\end{align*}
One can also obtain
\begin{align}\label{eq:10012104}
&\sum_{\substack{k_1,\ldots,k_{m+1} \geq 1 \\ [k_1,k_2]k_3 \cdots k_{m+1} \leq h \\ P^+(k_j) \leq y ~(\forall j)}} 
\Lambda(k_1) \cdots \Lambda(k_{m+1}) \omega_f([k_1,k_2] k_3 \cdots k_{m+1}) \\
&\leq A(f)^{m+1} (\log{x})^{m+1} \left(\frac{y}{\log{y}}\right)^{m+1} \nonumber
\end{align}
in a similar way using \eqref{eq:09292107} in place of \eqref{eq:09292105} and \eqref{eq:09292106}. 
From these, we conclude that the desired estimate for $W_m^+$ follows. 
Finally, we have for $2 \leq i \leq m$, 
\begin{align*}
&\sum_{\substack{k_1,\ldots,k_{i+1} \geq 1 \\ [k_1,k_2]k_3 \cdots k_i \leq h,~ \frac{h}{[k_1,k_2]k_3 \cdots k_i}<k_{i+1} \leq f(x) \\ P^+(k_j) \leq y ~(\forall j)}} 
\Lambda(k_1) \cdots \Lambda(k_{i+1}) \omega_f([k_1,k_2] k_3 \cdots k_{i+1}) \\
&\leq A(f)^{i+1} (\log{x})^i \log{f(x)} \left(\frac{y}{\log{y}}\right)^{i+1} 
\end{align*}
along the same line as \eqref{eq:10012104}. 
Therefore $W_i^-$ is estimated as
\begin{align*}
W_i^-
\leq \frac{A(f)^{i+1} (\log{x})^i \log{f(x)} \left(\frac{y}{\log{y}}\right)^{i+1}}{(\log{f(z)})^2 \left(\log{\frac{f(z)}{x}}\right)^{i-1}} 
\end{align*}
for $2 \leq i \leq m$. 
Furthermore, one can check that the estimate
\begin{align*}
W_1^-
\leq \frac{A(f)^{2} (\log{f(x)})^2 \left(\frac{y}{\log{y}}\right)^2}{(\log{f(z)})^2}
\end{align*}
holds. 
Hence the desired estimate for $W_i^-$ holds for every $i=1,\ldots,m$ by recalling that $\log{f(x)}>\log{f(z)}>\log{x}$. 
\end{proof}

\begin{proof}[Proof of Theorem \ref{thm:1.1}]
Recall that we may suppose that $f(t)$ satisfies the conditions of Propositions \ref{prop:4.3} and \ref{prop:4.4} without loss of generality. 
Then we complete the proof by suitably choosing the parameters in Propositions \ref{prop:4.3} and \ref{prop:4.4}. 
Let
\begin{align}\label{eq:10021617}
y
= x^{1/u}
\quad\text{and}\quad
m
= [u]
\quad\text{with}\quad
1 \leq u
\leq \frac{\sqrt{\log{x}}}{\log\log{x}}. 
\end{align}
Then we have
\begin{align*}
h g^m (\log{y}+O(1))^m
&\leq x g^{[u]} (\log{x})^{[u]} u^{-[u]} \left(1+O\left(\frac{u}{\log{x}}\right)\right)^{[u]} \\
&= x g^{[u]} (\log{x})^{[u]} u^{-[u]} \left(1+O\left(\frac{1}{(\log\log{x})^2}\right)\right) 
\end{align*}
for $x \geq x_0(f)$ with a large positive constant $x_0(f)$. 
On the other hand, we obtain
\begin{align}\label{eq:10021642}
A(f)^m (\log{x})^m \left(\frac{y}{\log{y}}\right)^m
&\leq x A(f)^{[u]} u^{[u]} \\
&= x g^{[u]} (\log{x})^{[u]} u^{-[u]} \left(\frac{A(f)}{g} \frac{u^2}{\log{x}}\right)^{[u]} \nonumber\\
&\leq x g^{[u]} (\log{x})^{[u]} u^{-[u]} \frac{A(f)}{(\log\log{x})^2} \nonumber
\end{align}
for $x \geq \exp(\exp(\sqrt{A(f)}))$.
Therefore, Proposition \ref{prop:4.3} implies the upper bound
\begin{align}\label{eq:10021637}
V_m^+
\leq \frac{x g^{[u]} (\log{x})^{[u]} u^{-[u]}}{2\log{f(z)} \left(\log{\frac{f(z)}{x}}\right)^{m-1}} 
\left(1+O\left(\frac{1}{(\log\log{x})^2}\right)\right)
\end{align}
for sufficiently large $x$, provided the conditions $f(z)>x$ and $x>z>T_0(f)$ are satisfied. 
Then we choose the parameter $z$ as 
\begin{align}\label{eq:10022046}
z 
= \frac{g^{[u]}}{d(d-1)^{[u]-1}u^{[u]}} \frac{x}{\log{x}}. 
\end{align}
One can see from the condition on $u$ in \eqref{eq:10021617} that $\sqrt{x}<z<4x (\log{x})^{-1}$ is satisfied for sufficiently large $x$. 
Therefore the condition $x>z>T_0(f)$ holds by making $x$ large enough. 
To check that $f(z)>x$ holds, we factorize the polynomial $f(t)$ as $f(t)=c_f(t-\alpha_1) \cdots (t-\alpha_d)$ with $c_f \in \mathbb{Z}_{>0}$ and $\alpha_1,\ldots,\alpha_d \in \mathbb{C}$. 
Then $\log{f(t)}$ is evaluated as 
\begin{align*}
\log{f(t)}
= \log{c_f}
+ d \log{t} 
+ \sum_{j=1}^{d} \log\left|1-\frac{\alpha_j}{t}\right| 
= d \log{t}+O(1)
\end{align*}
for $t>\max\{T_0(f),|\alpha_1|,\ldots,|\alpha_d|\}$, where the implied constant depends only on $c_f$ and $d$. 
Applying this, we derive the asymptotic formula
\begin{align*}
\log{f(z)}
&= d \log{x} 
+ d \log\left(\frac{g^{[u]}}{d(d-1)^{[u]-1}u^{[u]}}\right)
+ O(\log\log{x}) \\ 
&= d \log{x}
+ O\left(u (\log{u}+1) +\log\log{x}\right), 
\end{align*}
where the implied constant depends at most on $f(t)$. 
This yields the formulas
\begin{align}
\log{f(z)}
&= d \log{x} \left(1+O\left(\frac{1}{\sqrt{\log{x}}}\right)\right), \label{eq:10031746} \\
\log{\frac{f(z)}{x}}
&= (d-1) \log{x} \left(1+O\left(\frac{1}{\sqrt{\log{x}}}\right)\right) \label{eq:10031747} 
\end{align}
by the condition on $u$ in \eqref{eq:10021617}. 
Here, we note that \eqref{eq:10031746} implies that $f(z)>x$ holds for sufficiently large $x$ due to $d \geq2$. 
Furthermore, inserting \eqref{eq:10031746} and \eqref{eq:10031747} to \eqref{eq:10021637}, we arrive at
\begin{align*}
V_m^+
\leq \frac{x g^{[u]}}{2d(d-1)^{[u]-1} u^{[u]}} 
\left(1+O\left(\frac{1}{\log\log{x}}\right)\right) 
\end{align*}
by recalling that we have
\begin{align*}
\left(1+O\left(\frac{1}{\sqrt{\log{x}}}\right)\right)^m
= 1+O\left(\frac{1}{\log\log{x}}\right)
\end{align*}
for sufficiently large $x$. 
Next, we evaluate $V_i^-$ for $i=1,\ldots,m$ as follows. 
In a way similar to \eqref{eq:10021642}, we obtain
\begin{align*}
A(f)^i (\log{x})^{i-1} \log{f(x)} \left(\frac{y}{\log{y}}\right)^i
\ll g^i x^{i/u}  (\log{x})^i u^{-i} \frac{1}{(\log\log{x})^2}, 
\end{align*}
where the implied constant depends at most on $f(t)$. 
Applying Proposition \ref{prop:4.3}, we derive the estimate
\begin{align*}
V_i^-
\ll \frac{g^i x^{i/u}  (\log{x})^i u^{-i}}{\log{f(z)} \left(\log{\frac{f(z)}{x}}\right)^{i-1}}
\frac{1}{(\log\log{x})^2}
\ll \frac{d-1}{d} \left(\frac{g}{d-1}\frac{x^{1/u}}{u}\right)^i 
\frac{1}{(\log\log{x})^2}
\end{align*}
by using \eqref{eq:10031746} and \eqref{eq:10031747}, where the implied constant is independent of $i$ and depends at most on $f(t)$.  
Remark that 
\begin{align*}
\frac{x^{1/u}}{u}
= \exp\left(\frac{1}{u}\log{x}-\log{u}\right)
\geq \exp\left(\sqrt{\log{x}}\right)
> 2
\end{align*}
holds for any $x \geq2$ by \eqref{eq:10021617}. 
It yields
\begin{align*}
\sum_{i=1}^{m} V_i^-
&\ll \frac{d-1}{d} \left(\frac{g}{d-1}\frac{x^{1/u}}{u}\right)^m
\frac{1}{(\log\log{x})^2} \\
&\leq \frac{x g^{[u]}}{d(d-1)^{[u]-1} u^{[u]}} \frac{1}{(\log\log{x})^2} \nonumber 
\end{align*} 
since $A+A^2+\cdots+A^m<2A^m$ for $A>2$. 
The bounds for $W_m^+$ and $W_i^-$ can be obtained similarly. 
Indeed, one can see that
\begin{align*}
W_m^+
&\leq \frac{x g^{[u]}(2g+1)}{8d^2(d-1)^{[u]-1} u^{[u]+1}} 
\left(1+O\left(\frac{1}{\log\log{x}}\right)\right), \\
\sum_{i=1}^{m} W_i^-
&\ll \frac{x g^{[u]}(2g+1)}{d^2(d-1)^{[u]-1} u^{[u]+1}} 
\frac{1}{(\log\log{x})^2}
\end{align*}
for sufficiently large $x$. 
As a result, we conclude that inequality \eqref{eq:09272331} holds with 
\begin{align*}
V
&= \frac{x g^{[u]}}{2d(d-1)^{[u]-1} u^{[u]}}
\left(1+O\left(\frac{1}{\log\log{x}}\right)\right), \\
W
&= \frac{x g^{[u]}(2g+1)}{8d^2(d-1)^{[u]-1} u^{[u]+1}} 
\left(1+O\left(\frac{1}{\log\log{x}}\right)\right)
\end{align*}
by Proposition \ref{prop:3.2}. 
Therefore, by \eqref{eq:10022027}, we obtain
\begin{align*}
&\Psi_f(x,x^{1/u})-\Psi_f(z,x^{1/u}) \\
&< \left(\frac{1}{2}+\frac{1}{2}\frac{2g+1}{8du}+\sqrt{\frac{1}{2}\frac{2g+1}{8du}+\frac{1}{4}\left(\frac{2g+1}{8du}\right)^2}\right) \\
&\qquad \times
\frac{x g^{[u]}}{d(d-1)^{[u]-1} u^{[u]}} 
\left(1+O\left(\frac{1}{\log\log{x}}\right)\right) \\
&= \gamma_f(u) \frac{g^{[u]}}{d(d-1)^{[u]-1} u^{[u]}} 
\left(x+O\left(\frac{x}{\log\log{x}}\right)\right). 
\end{align*}
We have trivially $\Psi_f(z,x^{1/u})\leq z$. 
Hence the desired bound for $\Psi_f(x,x^{1/u})$ follows by the choice of $z$ as in \eqref{eq:10022046}. 
\end{proof}

\section{Proof of Theorem \ref{thm:1.2} and its corollary}\label{sec:5}
Let $\alpha$ be an arbitrary algebraic irrational number, and $K=\mathbb{Q}(\alpha)$ the algebraic field generated by $\alpha$ over the rationals. 
For any rational integer $n \geq0$, we have
\begin{gather*}
(n+\alpha) \mathfrak{a}
= \prod_{\mathfrak{p} \in J_\alpha} \mathfrak{p}^{u_n(\mathfrak{p})}, 
\end{gather*}
where $J_\alpha$ denotes the set of all prime ideals of $K$, and $u_n(\mathfrak{p})$ are non-negative integers. 
Define $P_\alpha$ as the subset of $J_\alpha$ consisting of prime ideals $\mathfrak{p}$ such that 
\begin{itemize}
\item[$(\mathrm{i})$]
$\mathfrak{p}$ is of the first degree; 
\item[$(\mathrm{ii})$]
$\mathfrak{p}$ is unambiguous. 
\end{itemize}
In other words, we have $\norm(\mathfrak{p})=p$ and $\mathfrak{p}^2 \nmid (p)$ for a rational prime $p$ if $\mathfrak{p} \in P_\alpha$. 
Then we rewrite the factorization of $(n+\alpha) \mathfrak{a}$ as 
\begin{align*}
(n+\alpha) \mathfrak{a}
= \mathfrak{b}_n \prod_{\mathfrak{p} \in P_\alpha} \mathfrak{p}^{u_n(\mathfrak{p})}, 
\end{align*}
where $\mathfrak{b}_n$ is the integral containing all prime factors of $(n+\alpha) \mathfrak{a}$ which are not in $P_\alpha$. 
By \cite[Lemma 4.4]{Mine2022+}, we know that there exists a constant $C(\alpha)>0$ depending at most on $\alpha$ such that
\begin{align}\label{eq:10061552}
\norm(\mathfrak{b}_n)
\leq C(\alpha)
\end{align}
holds for any $n \in \mathbb{Z}$. 
Thus we have $\norm(\mathfrak{p}) \leq C(\alpha)$ for any $\mathfrak{p} \in J_\alpha \setminus P_\alpha$. 
Then we prove Theorem \ref{thm:1.2} by using the following lemmas. 

\begin{lemma}\label{lem:5.2}
Take an irreducible polynomial $f(t) \in \mathbb{Z}[t]$ with $f(-\alpha)=0$, and denote by $A_f$ its leading coefficient. 
Let $C(\alpha)>0$ be the constant described in \eqref{eq:10061552}. 
Then, for any $n \in \mathbb{Z}$ and any $x>\max\{C(\alpha), \norm(\mathfrak{a}), |A_f|\}$, the following conditions are equivalent. 
\begin{itemize}
\item[$(\mathrm{A})$]
There exists a rational prime $p$ such that $p \mid f(n)$ and $p>x$. 
\item[$(\mathrm{B})$]
There exists a prime ideal $\mathfrak{p} \in P_\alpha$ such that $\mathfrak{p} \mid (n+\alpha)\mathfrak{a}$ and $\norm(\mathfrak{p})>x$. 
\end{itemize}
\end{lemma}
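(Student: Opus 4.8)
The plan is to reduce both implications to a single norm identity, namely
\[
|f(n)|\cdot\norm(\mathfrak{a})
= |A_f|\cdot\norm\bigl((n+\alpha)\mathfrak{a}\bigr),
\]
combined with Lemma~\ref{lem:5.1}. First I would record that, since $f(t)$ is irreducible over $\mathbb{Z}[t]$ with $f(-\alpha)=0$, it is a primitive constant multiple of the minimal polynomial of $-\alpha$ over $\mathbb{Q}$; in particular $\deg f = [K:\mathbb{Q}]=d$, and the roots of $f$ are exactly the conjugates $-\sigma(\alpha)$ of $-\alpha$, so $f(n)=A_f\prod_\sigma\bigl(n+\sigma(\alpha)\bigr)=A_f\,\norm_{K/\mathbb{Q}}(n+\alpha)$. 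Taking absolute values, multiplying by $\norm(\mathfrak{a})$, and using multiplicativity of the ideal norm on the integral ideal $(n+\alpha)\mathfrak{a}=\mathfrak{a}_n\mathfrak{b}_n$ gives the displayed identity. The point of the hypothesis $x>\max\{C(\alpha),\norm(\mathfrak{a}),|A_f|\}$ is that $|A_f|$, $\norm(\mathfrak{a})$, and $\norm(\mathfrak{b}_n)$ (the last bounded by Lemma~\ref{lem:5.1}) are then positive integers strictly below any rational prime $p>x$.

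For $(\mathrm{B})\Rightarrow(\mathrm{A})$, take $\mathfrak{p}\in P_K$ with $\mathfrak{p}\mid(n+\alpha)\mathfrak{a}$ and $\norm(\mathfrak{p})>x$. Since $\mathfrak{p}$ is of the first degree, $p:=\norm(\mathfrak{p})$ is a rational prime with $p>x$, and $\mathfrak{p}\mid(n+\alpha)\mathfrak{a}$ forces $p=\norm(\mathfrak{p})\mid\norm((n+\alpha)\mathfrak{a})$. By the norm identity $p\mid|f(n)|\,\norm(\mathfrak{a})$, and as $p>x>\norm(\mathfrak{a})\ge1$ we have $\gcd(p,\norm(\mathfrak{a}))=1$, hence $p\mid f(n)$. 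This is $(\mathrm{A})$.

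For $(\mathrm{A})\Rightarrow(\mathrm{B})$, suppose $p\mid f(n)$ with $p>x$. Then $p>x>|A_f|\ge1$ gives $p\nmid A_f$, so the norm identity yields $p\mid\norm((n+\alpha)\mathfrak{a})$, and therefore some prime ideal $\mathfrak{p}$ of $K$ lying over $p$ divides $(n+\alpha)\mathfrak{a}=\mathfrak{a}_n\mathfrak{b}_n$. Since $p>x>C(\alpha)\ge\norm(\mathfrak{b}_n)$ by Lemma~\ref{lem:5.1}, no prime ideal over $p$ can divide $\mathfrak{b}_n$, so $\mathfrak{p}\mid\mathfrak{a}_n$. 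By the definition of $\mathfrak{a}_n$ this means $\mathfrak{p}\in P_K$ — which simultaneously delivers the first-degree and unambiguity conditions $(\mathrm{i})$, $(\mathrm{ii})$ — and then $\norm(\mathfrak{p})=p>x$. This is $(\mathrm{B})$.

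The only genuinely delicate step is the bookkeeping with the stray factors $|A_f|$ and $\norm(\mathfrak{a})$ (and, in the reverse direction, with $\norm(\mathfrak{b}_n)$): one must ensure that a large rational prime dividing one side of the norm identity divides the \emph{intended} factor on the other side rather than being absorbed into the bounded parts. That is exactly what the size hypothesis on $x$ guarantees. Everything else is a routine dictionary between rational primes dividing $f(n)=A_f\norm_{K/\mathbb{Q}}(n+\alpha)$ and prime ideals dividing $(n+\alpha)\mathfrak{a}$, via the splitting $(n+\alpha)\mathfrak{a}=\mathfrak{a}_n\mathfrak{b}_n$ and the discarding of $\mathfrak{b}_n$ afforded by Lemma~\ref{lem:5.1}.
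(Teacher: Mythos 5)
Your proposal is correct and follows essentially the same route as the paper: establish the norm identity $|f(n)|\,\norm(\mathfrak{a}) = |A_f|\,\norm((n+\alpha)\mathfrak{a})$, use the size hypothesis to rule out the small factors $|A_f|$, $\norm(\mathfrak{a})$ from absorbing a prime $p>x$, and invoke Lemma~\ref{lem:5.1} to force the ambient prime ideal over $p$ to lie in $P_K$. Your phrasing in the $(\mathrm{A})\Rightarrow(\mathrm{B})$ direction via the factorization $\mathfrak{a}_n\mathfrak{b}_n$ is just a slightly more explicit rendering of the paper's contradiction argument, not a genuinely different idea.
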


\begin{proof}
We first remark that the identity
\begin{align}\label{eq:10031102}
\frac{\norm((n+\alpha)\mathfrak{a})}{\norm(\mathfrak{a})}
= \left|(n+\alpha_1) \cdots (n+\alpha_d)\right|
= \left|\frac{f(n)}{A_f}\right|
\end{align}
holds, where $\alpha_1,\ldots,\alpha_d$ are the conjugates of $\alpha$ over $\mathbb{Q}$. 
Suppose that there exists a rational prime $p$ such that $p \mid f(n)$ and $p>x$. 
By \eqref{eq:10031102}, we deduce that $p$ divides $\norm((n+\alpha)\mathfrak{a})$ or $A_f$, but the later does not hold due to $x>|A_f|$. 
Thus we have that $p \mid \norm((n+\alpha)\mathfrak{a})$. 
Furthermore, it implies that there exists a prime ideal $\mathfrak{p}$ of $K$ such that $\mathfrak{p} \mid (n+\alpha)\mathfrak{a}$ and $\norm(\mathfrak{p})=p^f$ with some $f \geq1$. 
If we suppose that $\mathfrak{p} \notin P_\alpha$, then $\norm(\mathfrak{p})>x>C(\alpha)$ would follow, which yields contradiction. 
Hence we conclude that $\mathfrak{p} \in P_\alpha$ holds, and therefore, $(\mathrm{A})$ implies $(\mathrm{B})$. 

Suppose conversely that there exists a prime ideal $\mathfrak{p} \in P_\alpha$ such that $\mathfrak{p} \mid (n+\alpha)\mathfrak{a}$ and $\norm(\mathfrak{p})>x$. 
Then $\norm(\mathfrak{p})=p$ holds for a rational prime $p$ by the definition of $P_\alpha$. 
Thus $p \mid \norm((n+\alpha)\mathfrak{a})$ and $p>x$ are satisfied. 
Furthermore, $p \mid \norm((n+\alpha)\mathfrak{a})$ implies that $p$ divides $f(n)$ or $\norm(\mathfrak{a})$ by \eqref{eq:10031102}, but the later does not hold due to $x>\norm(\mathfrak{a})$. 
As a result, we conclude that $(\mathrm{B})$ implies $(\mathrm{A})$. 
\end{proof}

\begin{lemma}[Lemma 4.6 in \cite{Mine2022+}]\label{lem:5.3}
Let $\mathfrak{p} \in P_\alpha$ and $v \geq1$. 
Assume $\mathfrak{p}^v \mid (m+\alpha)\mathfrak{a}$ and $\mathfrak{p}^v \mid (n+\alpha)\mathfrak{a}$ for $m,n \in \mathbb{Z}$. 
Then we have $m \equiv n ~(\bmod\,{p^v})$ with $\norm(\mathfrak{p})=p$. 
\end{lemma}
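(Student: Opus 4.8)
The plan is to argue $\mathfrak{p}$-adically. I write $v_\mathfrak{p}$ for the normalized $\mathfrak{p}$-adic valuation on $K$, so that $v_\mathfrak{p}(x)$ (resp.\ $v_\mathfrak{p}(\mathfrak{b})$) is the exponent of $\mathfrak{p}$ in the factorization of the fractional ideal $(x)$ (resp.\ of $\mathfrak{b}$), in agreement with the notation $v_\mathfrak{p}(n+\alpha)$ already used above. Since $\mathfrak{p} \in P_K$, it is of first degree, so $\norm(\mathfrak{p}) = p$ is a rational prime and $\mathfrak{p} \mid (p)$; being unambiguous, $\mathfrak{p}^2 \nmid (p)$, and together these force the ramification index $e(\mathfrak{p}/p) = 1$. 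Hence for every rational integer $k$ one has $v_\mathfrak{p}(k) = e(\mathfrak{p}/p)\, v_p(k) = v_p(k)$, where $v_p$ is the usual $p$-adic valuation. In particular the desired conclusion $m \equiv n \pmod{p^v}$ is equivalent to $v_\mathfrak{p}(m-n) \geq v$, and this is what I will establish.

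Next I would extract the behaviour of $\mathfrak{a}$ and $\alpha$ at $\mathfrak{p}$. Because $\mathfrak{a}$ is the ideal denominator of $(\alpha)$, one has $v_\mathfrak{p}(\mathfrak{a}) = \max\{0,\, -v_\mathfrak{p}(\alpha)\}$. The crucial first step is that the hypothesis $\mathfrak{p}^v \mid (m+\alpha)\mathfrak{a}$ with $v \geq 1$ forces $\mathfrak{p} \nmid \mathfrak{a}$: if $v_\mathfrak{p}(\mathfrak{a}) > 0$ then $v_\mathfrak{p}(\alpha) = -v_\mathfrak{p}(\mathfrak{a}) < 0 \leq v_\mathfrak{p}(m)$, so by the ultrametric inequality $v_\mathfrak{p}(m+\alpha) = v_\mathfrak{p}(\alpha)$, whence $v_\mathfrak{p}\big((m+\alpha)\mathfrak{a}\big) = v_\mathfrak{p}(\alpha) + v_\mathfrak{p}(\mathfrak{a}) = 0$, contradicting $\mathfrak{p}^v \mid (m+\alpha)\mathfrak{a}$. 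Consequently $v_\mathfrak{p}(\mathfrak{a}) = 0$, so $\alpha$ is $\mathfrak{p}$-integral, and the two hypotheses simplify to $v_\mathfrak{p}(m+\alpha) \geq v$ and $v_\mathfrak{p}(n+\alpha) \geq v$.

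The remaining step is a one-line subtraction: $v_\mathfrak{p}(m-n) = v_\mathfrak{p}\big((m+\alpha) - (n+\alpha)\big) \geq \min\{v_\mathfrak{p}(m+\alpha),\, v_\mathfrak{p}(n+\alpha)\} \geq v$. Combining this with the identity $v_\mathfrak{p}(m-n) = v_p(m-n)$ from the first paragraph gives $p^v \mid m-n$, i.e. $m \equiv n \pmod{p^v}$, as claimed.

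I do not expect a genuine obstacle. The only point needing care is the first step, the exclusion of $\mathfrak{p} \mid \mathfrak{a}$: a priori a pole of $\alpha$ at $\mathfrak{p}$ could be exactly cancelled by the factor $\mathfrak{a}$, which would decouple the divisibility of $(m+\alpha)\mathfrak{a}$ from that of $m+\alpha$ and break the argument. Once that degenerate case is ruled out, the proof is a routine manipulation of valuations together with the two defining properties of $P_K$.
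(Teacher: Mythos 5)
Your proof is correct and follows essentially the same path as the paper's: both subtract the two divisibility hypotheses to get $\mathfrak{p}^v \mid (m-n)$ and then use unambiguity (unramifiedness) together with the first-degree hypothesis to descend to $p^v \mid (m-n)$ in $\mathbb{Z}$. The only difference is that you explicitly verify $\mathfrak{p} \nmid \mathfrak{a}$ via the ultrametric inequality, whereas the paper invokes this fact without comment --- a small gap on the paper's side that your argument neatly fills.
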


\begin{proof}[Proof of Theorem \ref{thm:1.2}]
Denote by $M_\alpha(x)$ the set of integers $n$ in $[1,x]$ such that there exists a prime ideal $\mathfrak{p}$ of $K$ with conditions \eqref{eq:10022256}. 
Furthermore, we also define $N_f(x)$ as the set of integers $n$ in $[1,x]$ such that $f(n)$ is not $x$-smooth. 
Then we have
\begin{align}\label{eq:10031220}
|M_\alpha(x)|=C_\alpha(x)
\quad\text{and}\quad
|N_f(x)|=x-\Psi_f(x,x)+O(1)
\end{align}
with an absolute implied constant. 
Take an integer $n \in N_f(x)$ arbitrarily. 
Then there exists a rational prime $p$ such that $p \mid f(n)$ and $p>x$. 
Here, we suppose $x>\max\{C(\alpha), \norm(\mathfrak{a}), |A_f|\}$ as in Lemma \ref{lem:5.2}. 
Then we deduce from Lemma \ref{lem:5.2} that there exists a prime ideal $\mathfrak{p} \in P_\alpha$ such that $\mathfrak{p} \mid (n+\alpha)\mathfrak{a}$ and $\norm(\mathfrak{p})>x$. 
If $\mathfrak{p} \mid (m+\alpha)\mathfrak{a}$ is satisfied for some $m \in \mathbb{Z}$ in $[1,x]$, then we apply Lemma \ref{lem:5.3} to deduce that $m \equiv n ~(\bmod\,{p})$ with $\norm(\mathfrak{p})=p$. 
Since $m,n \in [1,x]$ and $p>x$, we must obtain $m=n$, and therefore, the condition
\begin{align*}
\mathfrak{p} \nmid \prod_{\substack{1 \leq m \leq x \\ m \neq n}} (m+\alpha)\mathfrak{a}
\end{align*}
is satisfied. 
As a result, we find that $n \in M_\alpha(x)$ holds if $n \in N_f(x)$. 
Hence the inclusion $N_f(x) \subset M_\alpha(x)$ follows. 

Next, we define $\phi(n)$ for $n \in \mathbb{Z}$ as the set of all prime ideals $\mathfrak{p}$ of $K$ such that conditions \eqref{eq:10022256} are satisfied. 
Then it is non-empty for any $n \in M_\alpha(x)$ by definition.  
On the other hand, we have $\phi(m) \cap \phi(n)=\emptyset$ for any integers $m,n \in M_\alpha(x)$ with $m \neq n$ by conditions \eqref{eq:10022256}. 
Furthermore, if $n \in M_\alpha(x) \setminus N_f(x)$, then any prime ideal $\mathfrak{p} \in \phi(n)$ satisfies $\norm(\mathfrak{p}) \leq x$ by Lemma \ref{lem:5.2}. 
Hence, we see that
\begin{align*}
\left|M_\alpha(x) \setminus N_f(x)\right|
\leq \pi_K(x)
\ll \frac{x}{\log{x}}
\end{align*}
by the prime ideal theorem, where the implied constant depends only on $K$. 
It yields the desired result by formulas \eqref{eq:10031220}. 
\end{proof}

\begin{proof}[Proof of Corollary \ref{cor:1.3}]
Let $f(t) \in \mathbb{Z}[t]$ be an irreducible polynomial with $f(-\alpha)=0$. 
Then $f(t)$ is of degree $d=\deg(\alpha) \geq2$. 
By Theorem \ref{thm:1.1} with $u=1$, we deduce from Theorem \ref{thm:1.2} that
\begin{align*}
C_\alpha(x)
> x- \frac{\gamma_f(1)}{d} \left(x+O\left(\frac{x}{\log\log{x}}\right)\right) 
+ O\left(\frac{x}{\log{x}}\right), 
\end{align*}
where the implied constants depend only on $f(t)$ and $K$. 
It yields the inequality
\begin{align*}
C_\alpha(x)
\geq \left(1-\frac{\gamma_f(1)}{d}\right)x
\end{align*}
for sufficiently large $x$. 
Furthermore, the coefficient is calculated as
\begin{align*}
1-\frac{\gamma_f(1)}{d}
= 1-\frac{1}{2d}-\frac{3}{16d^2}-\frac{1}{d}\sqrt{\frac{3}{16d}+\left(\frac{3}{16d}\right)^2} 
\end{align*}
by using \eqref{eq:10031455}. 
We also obtain
\begin{align*}
1-\frac{\gamma_f(1)}{d}
> 1-\frac{0.914}{2}
= 0.543
\end{align*}
by \eqref{eq:10031502} and $d \geq2$. 
Hence the proof is completed. 
\end{proof}

Let $f(t)$ be an irreducible polynomial with $f(-\alpha)=1$. 
If \eqref{eq:09251118} is true at least for $u=1$, then we deduce from Theorem \ref{thm:1.2} that
\begin{align*}
C_\alpha(x)
\sim (1-\rho(d))x
\end{align*}
as $x \to\infty$, where we put $d=\deg(\alpha) \geq2$. 
Here, it is known that $\rho(d)=d^{-d+o(d)}$ as $d \to\infty$. 
Thus the result of Theorem \ref{thm:1.2} still leaves room for improvement. 

Remark that the formulation of Corollary \ref{cor:1.3} is due to Worley \cite{Worley1970}, and the original statement by Cassels in \cite{Cassels1961} is slightly different. 
To close this section, we confirm the original statement of Cassels. 

\begin{theorem}[Cassels \cite{Cassels1961}]\label{thm:5.6}
Let $\alpha$ be an algebraic irrational number. 
Then there exists an integer $N_0=N_0(\alpha)>10^6$ depending only on $\alpha$ with the following property. 
Suppose that $N \geq N_0$ and put $M = \lfloor 10^{-6} N \rfloor$. 
Then at least $51M/100$ integers in $N<n \leq N+M$ are such that $(n+\alpha) \mathfrak{a}$ is divisible by a prime ideal $\mathfrak{p}$ which does not divide $(m+\alpha) \mathfrak{a}$ for any integer $0 \leq m \leq N+M$ with $m \neq n$. 
\end{theorem}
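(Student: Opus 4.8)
The plan is to run the argument behind Theorem~\ref{thm:1.1} on the short interval $(N,N+M]$. Let $f(t)\in\mathbb{Z}[t]$ be the irreducible polynomial with $f(-\alpha)=0$, normalised so that $f(t)\to\infty$; then $\deg f=d=\deg(\alpha)\geq 2$ and the number $g$ of distinct irreducible factors is $1$. Put $x=N+M$, $z=N$, $y=N+M$ and $h=x-z=M$; for $N\geq N_0(\alpha)$ all hypotheses of Proposition~\ref{prop:2.1} and of Lemmas~\ref{lem:5.2}--\ref{lem:5.3} hold, one has $f(n)>0$ and $\log f(n)=d\log x+O(1)$ for $n\in(N,N+M]$, and $M\geq\sqrt{N+M}$. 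Since $\Psi_f(N+M,N+M)-\Psi_f(N,N+M)$ counts exactly the integers $n\in(N,N+M]$ with $P^+(f(n))\leq N+M$, the number of $n\in(N,N+M]$ with $P^+(f(n))>N+M$ equals $M-\bigl(\Psi_f(N+M,N+M)-\Psi_f(N,N+M)\bigr)$, and I aim to show this exceeds $\tfrac{51}{100}M$.

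Applying Proposition~\ref{prop:2.1} with these parameters (the ``$m=1$'' case, so no induction) yields \eqref{eq:09272331} for the associated $V$ and $W$, and I claim $V=\tfrac{M}{2d}+O(M/\log x)$ and $W=\tfrac{3M}{8d^{2}}+O(M/\log x)$. For $V$ only prime powers $k=p^{v}$ contribute, and one bounds the inner sum by $\#\{n\in(N,N+M]:k\mid f(n)\}\leq\omega_f(k)(M/k+1)$: the terms $v=1$, $\sqrt{y}<p\leq y$, give $\tfrac{M}{\log f(z)}\sum_{\sqrt{y}<p\leq y}\tfrac{(\log p)\omega_f(p)}{p}+O\bigl(\tfrac{1}{\log f(z)}\sum_{p\leq y}\log p\bigr)$, which by \eqref{eq:09292108} (with $g=1$) and the prime number theorem is $\tfrac{M}{2d}+O(M/\log x)$; the terms $v\geq 2$ satisfy $p^{v}>y>M$, so the inner sum is $\leq\omega_f(p^{v})\leq d\sqrt{\Delta_f}$, and there are $O(y/\log y)$ of them, contributing $O(M/\log x)$. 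For $W$ one splits the sum over $[k_1,k_2]$ at $h=M$. The range $[k_1,k_2]\leq M$ gives, via $\#\{n\}\leq\omega_f([k_1,k_2])(M/[k_1,k_2]+1)$, a main piece $\tfrac{M}{(\log f(z))^{2}}\sum\tfrac{\Lambda(k_1)\Lambda(k_2)\omega_f([k_1,k_2])}{[k_1,k_2]}$ and a piece $\tfrac{1}{(\log f(z))^{2}}\sum\Lambda(k_1)\Lambda(k_2)\omega_f([k_1,k_2])$; by \eqref{eq:09292105} (for the part with $(k_1,k_2)=1$, which is $\leq(\tfrac12\log y+O(1))^{2}$) and by \eqref{eq:09292106} with the identity $\sum_{[k_1,k_2]=\kappa,\,(k_1,k_2)>1}\tfrac{\Lambda(k_1)\Lambda(k_2)}{\kappa}=\tfrac{(2\log\kappa-\Lambda(\kappa))\Lambda(\kappa)}{\kappa}$ (for the part with $(k_1,k_2)>1$), the main piece is $\tfrac{3M}{8d^{2}}+O(M/\log x)$, while the second piece is $\ll M\log M$ and hence $O(M/\log x)$.

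The delicate term, and the main obstacle, is the range $[k_1,k_2]>M$ in $W$. Here the inner count over $n$ is at most $\omega_f([k_1,k_2])$ and vanishes unless $[k_1,k_2]\mid f(n)$ for some $n\in(N,N+M]$; crucially $P^+(k_i)\leq\sqrt{y}=\sqrt{N+M}$ while $[k_1,k_2]>M\geq\sqrt{N+M}$, so writing $k_i=p_i^{a_i}$ one must have $p_j^{a_j}>\sqrt{N+M}$ for some $j$, forcing $a_j\geq 2$. Thus $f(n)$ must have a prime-power divisor $p^{a}$ with $a\geq 2$ and $p^{a}>\sqrt{N+M}$. Writing $p^{a(p)}$ for the least power of $p$ exceeding $\sqrt{y}$, the number of such exceptional $n$ is at most $\sum_{p\leq\sqrt{y}}\#\{n\in(N,N+M]:p^{a(p)}\mid f(n)\}\ll\sum_{p\leq\sqrt{y}}d\sqrt{\Delta_f}\,(M/\sqrt{y}+1)\ll d\sqrt{\Delta_f}\,M/\log y=O(M/\log x)$, using $\omega_f(p^{a(p)})\leq d\sqrt{\Delta_f}$ and the prime number theorem. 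Interchanging the order of summation, each exceptional $n$ contributes at most $\bigl(\sum_{k\mid f(n),\,P^+(k)\leq\sqrt{y}}\Lambda(k)\bigr)^{2}\leq(\log f(n))^{2}=(\log f(z))^{2}(1+O(1/\log x))$, so the whole range $[k_1,k_2]>M$ contributes $O(M/\log x)$ to $W$ as well.

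Combining these estimates and using \eqref{eq:10022027} (noting that $V+\tfrac{W}{2}+\sqrt{VW+\tfrac{W^{2}}{4}}$ is nondecreasing and homogeneous of degree $1$ in $(V,W)$, and that $V=\tfrac{M}{2d}(1+o(1))$, $W=\tfrac{3M}{8d^{2}}(1+o(1))$ reproduce exactly the combination computed at the end of Section~\ref{sec:4} with $x$ replaced by $M$, $u=1$, $g=1$), one obtains
\[
\Psi_f(N+M,N+M)-\Psi_f(N,N+M)<\gamma_f(1)\,\frac{1}{d}\,M\bigl(1+O(1/\log x)\bigr).
\]
By \eqref{eq:10031502} and $d\geq 2$ this is $<\tfrac{0.914}{2}M(1+o(1))<0.49\,M$ for $N\geq N_0$, so more than $\tfrac{51}{100}M$ integers $n\in(N,N+M]$ satisfy $P^+(f(n))>N+M$. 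For each such $n$, Lemma~\ref{lem:5.2} (with $x=N+M$) gives a prime ideal $\mathfrak{p}\in P_K$ with $\mathfrak{p}\mid(n+\alpha)\mathfrak{a}$ and $\norm(\mathfrak{p})>N+M$; since $\norm(\mathfrak{p})$ is then a rational prime exceeding $N+M$, Lemma~\ref{lem:5.3} with $v=1$ forces any $m\in[0,N+M]$ with $\mathfrak{p}\mid(m+\alpha)\mathfrak{a}$ to satisfy $m\equiv n$ modulo that prime, hence $m=n$. Thus $\mathfrak{p}$ has the property demanded in the statement, which completes the proof.
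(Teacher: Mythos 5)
Your overall strategy is the one the paper takes: apply the Section~2--4 machinery with $m=u=1$, $x=N+M$, $z=N$ (so $h=M$), bound $\Psi_f(N+M,N+M)-\Psi_f(N,N+M)$ via \eqref{eq:10022027}, then translate back to prime ideals with Lemmas~\ref{lem:5.2} and \ref{lem:5.3}. (The paper invokes Propositions~\ref{prop:4.3} and \ref{prop:4.4} instead of working directly from Proposition~\ref{prop:2.1}, but this is cosmetic.) Your evaluations of the main terms of $V$ and $W$ and the concluding numerics with $\gamma_f(1)$ are fine.

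There is, however, a genuine gap in your treatment of the tail of $W$, i.e.\ the range $[k_1,k_2]>M$. You assert that $[k_1,k_2]>M\geq\sqrt{N+M}$ together with $P^+(k_1),P^+(k_2)\leq\sqrt{N+M}$ forces one of the prime powers $k_j=p_j^{a_j}$ to exceed $\sqrt{N+M}$ and hence to have $a_j\geq 2$. This is false: if $p_1\neq p_2$ are two distinct primes, both close to $\sqrt{N+M}$, and $k_1=p_1$, $k_2=p_2$, then $[k_1,k_2]=p_1p_2$ can be as large as $N+M$, which is far bigger than $M\approx 10^{-6}(N+M)$, while $a_1=a_2=1$ and $k_1,k_2\leq\sqrt{N+M}$. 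Consequently your characterisation of the exceptional integers $n$ (``$f(n)$ must have a prime-power divisor $p^a$ with $a\geq 2$ and $p^a>\sqrt{N+M}$'') omits the bulk of the tail, namely $n$ divisible by a product $p_1p_2$ of two distinct primes $\leq\sqrt{N+M}$ with $p_1p_2>M$, and the claimed bound $O(M/\log x)$ does not follow from what you wrote.

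The simplest fix is to abandon the clever pointwise count and bound the tail directly, as the paper does in spirit: since $[k_1,k_2]>M=x-z$, the inner sum over $n$ is at most $\omega_f([k_1,k_2])\leq d^2\Delta_f$ by Lemma~\ref{lem:4.2}, so
\begin{align*}
\frac{1}{(\log f(z))^2}\sum_{\substack{[k_1,k_2]>M\\ P^+(k_j)\leq\sqrt{y}}}\Lambda(k_1)\Lambda(k_2)\,\omega_f([k_1,k_2])
\ll \frac{1}{(\log f(z))^2}\Bigl(\sum_{p\leq\sqrt{y}}\Bigl\lfloor\tfrac{\log f(x)}{\log p}\Bigr\rfloor\log p\Bigr)^2
\ll \frac{N+M}{(\log N)^2},
\end{align*}
which is $o(M)$ since $M\asymp 10^{-6}N$; this matches the error term the paper records for $W$. (Alternatively, if you insist on counting exceptional $n$, you must also count those $n$ with two distinct prime divisors $p_1,p_2\leq\sqrt{N+M}$, $p_1p_2>M$; their number is $\ll \pi(\sqrt{N+M})^2\ll (N+M)/(\log N)^2$, which is $o(M)$, so the conclusion survives but only with this additional case.) Note also that it is essential here to keep the original constraint $P^+(k_j)\leq\sqrt{y}$ from Proposition~\ref{prop:2.1}; applying the displayed bound for $W_1^-$ in Proposition~\ref{prop:4.4}, which relaxes to $P^+(k_j)\leq y$, would give only $O\bigl((N+M)^2/(\log N)^2\bigr)$, which is not $o(M)$.
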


\begin{proof}
The only difference is the intervals where $n$ belongs. 
To show Theorem \ref{thm:5.6}, we apply Propositions \ref{prop:4.3} and \ref{prop:4.4} with $x=N+M$, $z=M$, and $m=u=1$. 
Then we see that inequality \eqref{eq:09272331} holds with
\begin{align*}
V
&= \frac{M (\log(N+M)+O(1))}{2 \log{f(N)}}
+ O\left(\frac{N+M}{\log{N}}\right), \\
W
&= \frac{3M (\log(N+M)+O(1))^2}{8(\log{f(N)})^2}
+ O\left(\frac{N+M}{(\log{N})^2}\right) 
\end{align*}
for any polynomial $f(t)$ satisfying the assumptions of Proposition \ref{prop:4.3} and \ref{prop:4.4}. 
Thus we obtain
\begin{align*}
\Psi_f(N+M,N+M)-\Psi_f(N,N+M)
< \frac{\kappa_f(1)}{d}M 
< 0.48M
\end{align*}
by \eqref{eq:10022027} if $N$ is large enough. 
Furthermore, along the same line as the proof of Theorem \ref{thm:1.2}, we see that the number of integers in $N<n \leq N+M$ with the desired property is equal to
\begin{align*}
M - \left(\Psi_f(N+M,N+M)-\Psi_f(N,N+M)\right)
+ O\left(\frac{N+M}{\log(N+M)}\right), 
\end{align*}
where $f(t) \in \mathbb{Z}[t]$ is an irreducible polynomial with $f(-\alpha)=0$. 
Hence we obtain the conclusion. 
\end{proof}

\section{Proof of Theorem \ref{thm:1.4} and its corollaries}\label{sec:6}
Firstly, we recall the following criterions, which yield $N(x)=R_1(x)$ immediately. 

\begin{lemma}[Everest--Stevens--Tamsett--Ward \cite{EverestStevensTamsettWard2007}]\label{lem:6.1}
Let $n \in \mathbb{Z}_{>|b|}$ with $-b$ not an integer square. 
Then $n^2+b$ has a primitive divisor if and only if $P^+(n^2+b)>2n$. 
\end{lemma}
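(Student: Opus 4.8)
The plan is to establish the two directions of the equivalence separately. Throughout I write $A_m=m^2+b$, and I record at the outset that the hypothesis ``$-b$ is not an integer square'' is precisely equivalent to $A_m\neq0$ for every $m\in\mathbb{Z}$, so every $A_m$ occurring below is a genuine nonzero term of the sequence; I also use freely the factorization $A_n-A_m=n^2-m^2=(n-m)(n+m)$.

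First I would prove that $P^+(n^2+b)>2n$ implies that $n^2+b$ has a primitive divisor, by showing that the prime $p:=P^+(n^2+b)$ is itself such a divisor. For any integer $m$ with $0\le m<n$ one has $n-m\in[1,n]$ and $n+m\in[n,2n-1]$, so both factors of $A_n-A_m=(n-m)(n+m)$ lie strictly between $0$ and $p$; since $p$ is prime this gives $p\nmid A_n-A_m$, and together with $p\mid A_n$ we conclude $p\nmid A_m$, hence $(p,A_m)=1$. As $p>1$, the prime $p$ is a primitive divisor of $A_n$.

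For the converse I would argue by contraposition: assuming $P^+(n^2+b)\le2n$, I show that no integer $d>1$ dividing $A_n$ can be primitive. Fix such a $d$ and a prime $p\mid d$; then $p\mid A_n$ and $p\le2n$, and it suffices to exhibit $m$ with $1\le m<n$ and $p\mid A_m$, since then $(d,A_m)>1$. If $p\mid b$, then $p\mid n^2$, and since $b\neq0$ this forces $p\le|b|<n$ by the hypothesis $n>|b|$, so $m=p$ works because $p\mid p^2+b=A_p$. If $p\nmid b$, let $r\in\{1,\dots,p-1\}$ be the residue of $n$ modulo $p$ (nonzero since $p\nmid b$ forces $p\nmid n$); then $r^2\equiv n^2\equiv(p-r)^2\pmod p$, so $p$ divides both $A_r$ and $A_{p-r}$, and $m^*:=\min(r,p-r)$ satisfies $1\le m^*\le p/2\le n$. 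If $m^*<n$ we are done; the only alternative is $m^*=n$, which forces $p=2n$ and $r=n$, so $2n$ is prime, hence $n=1$, and then $n>|b|$ gives $b=0$, contradicting that $-b$ is not a square.

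The part requiring genuine care will be the boundary cases of the converse, namely ensuring that the auxiliary index $m$ one produces is both at least $1$ and strictly below $n$; this is exactly where the two hypotheses are used, since $n>|b|$ rules out $p=n$ when $p\mid b$ and rules out $n=1$ when $p=2n$, while ``$-b$ not a square'' guarantees that every $A_m$ is nonzero, so that these are indeed terms of the sequence. Apart from this bookkeeping, the proof is only elementary manipulation of the congruence $n^2\equiv-b\pmod p$, and I do not anticipate any substantial obstacle.
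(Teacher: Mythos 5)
The paper does not prove Lemma \ref{lem:6.1}; it is cited from Everest--Stevens--Tamsett--Ward and used as a black box, so there is no in-paper argument to compare against. Your self-contained elementary proof is correct, and it is essentially the natural argument one would expect to find in the cited source. The forward direction is clean: for $p=P^+(n^2+b)>2n$ and $0\le m<n$, both factors of $A_n-A_m=(n-m)(n+m)$ lie in $[1,2n-1]\subset[1,p-1]$, so $p\nmid A_n-A_m$, hence $p\nmid A_m$. For the converse you correctly reduce to a single prime $p\mid A_n$ with $p\le 2n$ and produce $m\in\{1,\dots,n-1\}$ with $p\mid A_m$; the two boundary cases are exactly where the hypotheses enter, and you handle both correctly: when $p\mid b$ the bound $n>|b|\ge p$ gives $m=p<n$, and when $p\nmid b$ the residue $r=n\bmod p$ yields $m^*=\min(r,p-r)\le p/2\le n$, with the degenerate case $m^*=n$ forcing $p=2n$ prime, hence $n=1$, hence $b=0$, contradicting ``$-b$ is not a square.'' Your opening observation that ``$-b$ not an integer square'' is equivalent to $A_m\ne 0$ for all $m$ is also the right way to discharge the ``nonzero terms'' clause in the definition of primitive divisor. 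I see no gaps.
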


\begin{lemma}[Todd \cite{Todd1949}]\label{lem:6.2}
Let $n \in \mathbb{Z}_{\geq1}$. 
Then $\arctan n$ is irreducible if and only if $P^+(n^2+1)>2n$. 
\end{lemma}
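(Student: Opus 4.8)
The plan is to convert arctangent relations into multiplicative relations between Gaussian integers. We may assume $n\ge2$, the case $n=1$ being immediate. Since $\arctan n=\arg(1+ni)$ and the argument map $\arg\colon\mathbb{Q}(i)^\times\to\mathbb{R}/2\pi\mathbb{Z}$ is a homomorphism with kernel $\mathbb{Q}_{>0}^\times$, a relation $\arctan n=\sum_j f_j\arctan n_j$ with $1\le n_j<n$ is equivalent, after reduction modulo $2\pi$, to the identity $1+ni=\prod_j(1+n_ji)^{f_j}$ in the group $Q:=\mathbb{Q}(i)^\times/\mathbb{Q}_{>0}^\times$; passing between $\mathbb{R}$ and $\mathbb{R}/2\pi\mathbb{Z}$ is harmless because $\pi=4\arctan1$ and $1<n$. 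A short computation in $\mathbb{Z}[i]$ identifies $Q$ with the direct product of a cyclic group of order $8$ generated by the class of $1+i$ (using $(1+i)^2\equiv i$, which comes from $2>0$) and, for each rational prime $p\equiv1\pmod4$, an infinite cyclic group generated by the class of a prime $\pi_p$ of $\mathbb{Z}[i]$ above $p$ (using $\bar\pi_p\equiv\pi_p^{-1}$, which comes from $p>0$). Hence $\arctan n$ is reducible if and only if $1+ni$ lies in the subgroup $\mathcal{B}_{n-1}\le Q$ generated by the classes of $1+mi$ with $1\le m\le n-1$; because $1+mi=i\,\overline{(m+i)}$ and $i=(1+i)^2$, this subgroup equals $\langle m+i:1\le m\le n-1\rangle$, so the question can be decided prime by prime.

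Next I would prove: if $P^+(n^2+1)>2n$ then $\arctan n$ is irreducible. Put $P=P^+(n^2+1)$, an odd prime dividing $n^2+1$ with $P>2n$. Then $P\nmid m^2+1$ for every $m$ with $1\le m<n$, since $P\mid m^2+1$ would force $m\equiv\pm n\pmod P$, hence $m\in\{n,P-n\}$, both of which are at least $n$. Projecting a hypothetical identity $1+ni=\prod_j(1+n_ji)^{f_j}$ to the $\langle\pi_P\rangle\cong\mathbb{Z}$ factor of $Q$, the left-hand side has image $\pm v_P(n^2+1)\ne0$ while the right-hand side has image $\sum_j f_j\bigl(\pm v_P(n_j^2+1)\bigr)=0$, a contradiction.

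For the converse, suppose every prime $p\mid n^2+1$ satisfies $p\le2n$; since $2n\nmid n^2+1$ this forces every odd such $p$ to satisfy $p\le2n-1$, and such $p$ are automatically $\equiv1\pmod4$. Writing $n+i=(1+i)^{\varepsilon}\prod_{p\mid n^2+1,\ p\text{ odd}}\pi_p^{\pm v_p(n^2+1)}$ in $Q$, it suffices to show $\langle\pi_p\rangle\subseteq\mathcal{B}_{n-1}$ for each odd prime $p\mid n^2+1$, and I would do this by strong induction on $p$ (over primes $\equiv1\pmod4$). Let $r_p$ be the least positive integer with $p\mid r_p^2+1$; then $r_p\le(p-1)/2\le n-1$, so $r_p+i\in\mathcal{B}_{n-1}$, and moreover $r_p\le(p-1)/2$ forces $v_p(r_p^2+1)=1$ and $r_p^2+1=pk$ with $k<p$, so every prime $q\ne p$ dividing $r_p^2+1$ satisfies $q<p$ and, dividing $r_p^2+1$ with $r_p\le n-1$, has least root $\le n-1$; by the inductive hypothesis $\langle\pi_q\rangle\subseteq\mathcal{B}_{n-1}$, while $1+i\in\mathcal{B}_{n-1}$ because $n\ge2$. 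Reading off the factorisation of $r_p+i$ in $Q$ --- which involves $\pi_p$ to the exponent $\pm1$ and otherwise only $1+i$ and the $\pi_q$ with $q<p$ --- and solving for $\pi_p^{\pm1}$ yields $\langle\pi_p\rangle\subseteq\mathcal{B}_{n-1}$, completing the induction. Therefore $n+i\in\mathcal{B}_{n-1}$ and $\arctan n$ is reducible, which together with the previous paragraph gives the equivalence.

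The main obstacle is the bookkeeping. One must make the structural description of $Q$ fully precise, keep track of which of $\pi_p,\bar\pi_p$ actually divides each $m+i$ and with which sign it contributes in $Q$, handle the ramified prime $1+i$ (equivalently the prime $2$) as a separate factor throughout, and use the bound $r_p\le(p-1)/2$ at the two distinct points above: once to guarantee $r_p<n$, and once to force $v_p(r_p^2+1)=1$ so that $\pi_p$ itself --- not merely a power of it --- is captured. The elementary facts that $-1$ is a quadratic residue modulo every odd prime divisor of $n^2+1$, and that $2n$ itself never divides $n^2+1$, should be isolated as preliminary observations.
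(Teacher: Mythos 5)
Your proof is correct. The paper gives no argument for this lemma (it simply cites Todd), so there is nothing in-paper to compare against; your route through the quotient $Q=\mathbb{Q}(i)^\times/\mathbb{Q}_{>0}^\times$ is the standard Gaussian-integer mechanism underlying Todd's result, and the key steps are all sound: the structure of $Q$ as $\mathbb{Z}/8\mathbb{Z}\times\bigoplus_{p\equiv1\,(4)}\mathbb{Z}$, the reduction of reducibility to $1+ni\in\mathcal{B}_{n-1}$ (with the passage between $\mathbb{R}$ and $\mathbb{R}/2\pi\mathbb{Z}$ handled via $\arctan 1$), the projection onto the $\pi_P$-coordinate for one implication, and the strong induction using $r_p\le(p-1)/2$ (which simultaneously gives $r_p\le n-1$ and forces $v_p(r_p^2+1)=1$ and $q<p$ for the other prime factors) for the other. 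One caveat: the case $n=1$ is not in fact ``immediate.'' Under the paper's literal definition there is no admissible $n_j$, so $\arctan 1$ is irreducible, whereas $P^+(1^2+1)=2=2n$ is not strictly greater than $2n$; the stated biconditional therefore fails at $n=1$. This is a defect of the lemma's formulation (it should be restricted to $n\ge2$, or use $\ge$ in place of $>$) rather than of your argument, and it has no bearing on the asymptotic application, but the parenthetical dismissal of $n=1$ should be replaced by an explicit remark that the equivalence is meant only for $n\ge2$.
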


Furthermore, Theorem \ref{thm:1.4} is proved by using Lemma \ref{lem:6.1} as follows. 

\begin{proof}[Proof of Theorem \ref{thm:1.4}]
By Lemma \ref{lem:6.1}, $R_b(x)$ is evaluated as
\begin{align*}
R_b(x)
= x - \sum_{\substack{1 \leq n \leq x \\ P^+(n^2+b)<2n}} 1 +O(1)
\end{align*}
for $x \geq1$, where the implied constant depends only on $|b|$. 
Furthermore, we obtain
\begin{align*}
\sum_{\substack{1 \leq n \leq x \\ P^+(n^2+b)<2n}} 1
&= \sum_{\substack{1 \leq n \leq x/2 \\ P^+(n^2+b)<2n}} 1
+ \sum_{\substack{x/2< n \leq x \\ P^+(n^2+b)<2n}} 1 \\ 
&= \sum_{\substack{1 \leq n \leq x/2 \\ P^+(n^2+b) \leq x}} 1
- \sum_{\substack{1 \leq n \leq x/2 \\ 2n \leq P^+(n^2+b) \leq x}} 1
+ \sum_{\substack{x/2< n \leq x \\ P^+(n^2+b) \leq x}} 1 
+ \sum_{\substack{x/2< n \leq x \\ x<P^+(n^2+b)<2n}} 1 \\
&= S_1-S_2+S_3+S_4, 
\end{align*}
say. 
Then $S_1+S_3=\Psi_f(x,x)$ holds with $f(t)=t^2+1$ by definition. 
On the other hand, $S_2$ and $S_4$ are evaluated as
\begin{align*}
S_2
&\leq \sum_{\substack{1 \leq n \leq z \\ 2n \leq P^+(n^2+b) \leq x}} 1
+ \sum_{\substack{z \leq n \leq x/2 \\ 2n \leq P^+(n^2+b) \leq x}} 1
\leq z + \sum_{\substack{z \leq n \leq x \\ 2z \leq P^+(n^2+b) \leq x}} 1, \\
S_4
&\leq \sum_{\substack{z< n \leq x \\ 2z<P^+(n^2+b)<2x}} 1
\end{align*}
with a parameter $z$ such that $1\leq z \leq x/2$. 
From the above, we obtain the formula
\begin{align}\label{eq:10111726}
R_b(x)
= x - \Psi_f(x,x) 
+ O\left(z+\sum_{\substack{z \leq n \leq x \\ 2z \leq P^+(n^2+b)<2x}} 1\right)
\end{align}
for $x \geq1$. 
Here, we see that
\begin{align*}
\sum_{\substack{z \leq n \leq x \\ 2z \leq P^+(n^2+b)<2x}} 1
= \sum_{2z \leq p < 2x} \sum_{\substack{z \leq n \leq x \\ P^+(n^2+b)=p}} 1
\leq \sum_{2z \leq p < 2x} 
\sum_{\substack{z \leq n \leq x \\ f(n) \equiv 0~ \hspace{-2mm} \pmod{p}}} 1
\end{align*}
holds since $P^+(n^2+b)=p$ implies $f(n) \equiv 0 ~(\bmod\,{p})$. 
We have $\omega_f(p) \leq 4 \sqrt{|b|}$ by applying \eqref{eq:09292100}. 
Therefore, the last sum is further evaluated as
\begin{align*}
\sum_{2z \leq p < 2x} 
\sum_{\substack{z \leq n \leq x \\ f(n) \equiv 0 \hspace{-2mm} \pmod{p}}} 1
&\ll \sum_{2z \leq p < 2x} \left(\frac{x}{p}+1\right) \\
&= x \log \left(\frac{\log(2x)}{\log(2z)}\right)
+ O\left(\frac{x}{\log(2z)}\right)
\end{align*}
by using the well-known formula
\begin{align*}
\sum_{p \leq t} \frac{1}{p}
= \log\log{t}+A+O\left(\frac{1}{\log{t}}\right). 
\end{align*}
Here, we choose the parameter $z$ as $z=x/(2\log{x})$. 
In this case, we have 
\begin{align*}
\log \left(\frac{\log(2x)}{\log(2z)}\right)
= \log \left\{\left(1+\frac{\log{2}}{\log{x}}\right)\left(1-\frac{\log\log{x}}{\log{x}}\right)^{-1}\right\}
\ll \frac{\log\log{x}}{\log{x}}
\end{align*}
for sufficiently large $x$. 
Hence we arrive at the estimate
\begin{align*}
\sum_{\substack{z \leq n \leq x \\ 2z \leq P^+(n^2+b)<2x}} 1
\ll \frac{x \log\log{x}}{\log{x}}, 
\end{align*}
which yields the desired result by \eqref{eq:10111726}. 
\end{proof}

\begin{proof}[Proof of Corollaries \ref{cor:1.5} and \ref{cor:1.6}]
The proof is similar to that of Corollary \ref{cor:1.3}. 
Let $f(t)=t^2+b$. 
By Theorem \ref{thm:1.1} with $u=1$, we deduce from Theorem \ref{thm:1.4} that
\begin{align*}
R_b(x)
> x- \frac{\gamma_f(1)}{2} \left(x+O\left(\frac{x}{\log\log{x}}\right)\right) 
+ O\left(\frac{x}{\log{x}}\right), 
\end{align*}
which implies the inequality
\begin{align*}
R_b(x)
\geq \left(1-\frac{\gamma_f(1)}{2}\right)x
\end{align*}
for sufficiently large $x$. 
Since we have 
\begin{align*}
1-\frac{\gamma_f(1)}{2}
> 1-\frac{0.914}{2}
= 0.543
\end{align*}
by using \eqref{eq:10031502}, we obtain the result for $R_b(x)$. 
The result for $N(x)$ also follows due to $N(x)=R_1(x)$ as explained before. 
\end{proof}


\providecommand{\bysame}{\leavevmode\hbox to3em{\hrulefill}\thinspace}
\providecommand{\MR}{\relax\ifhmode\unskip\space\fi MR }
\providecommand{\MRhref}[2]{%
  \href{http://www.ams.org/mathscinet-getitem?mr=#1}{#2}
}
\providecommand{\href}[2]{#2}

\end{document}